\definecolor{webgreen}{rgb}{0,.5,0}
\definecolor{webbrown}{rgb}{.6,0,0}
\definecolor{red}{rgb}{1,0,0}
\newtheorem{corollary}{Corollary}
\newtheorem{theorem}{Theorem}
\newtheorem{lemma}{Lemma}
\numberwithin{equation}{section}
\newcommand{\seqnum}[1]{\href{http://oeis.org/#1}{\underline{#1}}}
\begin{document}

\title[A Matrix Related to Stern Polynomials]{A Matrix Related to Stern Polynomials and the Prouhet-Thue-Morse Sequence}

\author{George Beck}
\address{Department of Mathematics and Statistics\\
         Dalhousie University\\
         Halifax, Nova Scotia, B3H 4R2, Canada, \hbox{and}
Wolfram Research, Inc., Champaign IL 61820.}
\email{george.beck@gmail.com}
\author{Karl Dilcher}
\address{Department of Mathematics and Statistics\\
         Dalhousie University\\
         Halifax, Nova Scotia, B3H 4R2, Canada.}
\email{dilcher@mathstat.dal.ca}
\thanks{Research supported in part by the Natural Sciences and Engineering
        Research Council of Canada}

\setcounter{equation}{0}

\begin{abstract}
The Stern polynomials defined by $s(0;x)=0$, $s(1;x)=1$, and for $n\geq 1$ by
$s(2n;x)=s(n;x^2)$ and $s(2n+1;x)=x\,s(n;x^2)+s(n+1;x^2)$ have only 0 and 1 as
coefficients. We construct an infinite lower-triangular matrix
related to the coefficients of the $s(n;x)$ and show that its inverse
has only 0, 1, and $-1$ as entries, which we find explicitly. 
In particular, the sign distribution of the entries is determined by the
Prouhet-Thue-Morse sequence. We also obtain other properties of this matrix 
and a related Pascal-type matrix that involve the Catalan, Stirling, Fibonacci,
Fine, and Padovan numbers. Further results involve compositions of integers, 
the Sierpi\'nski matrix, and identities connecting the Stern and 
Prouhet-Thue-Morse sequences.
\end{abstract}

\maketitle

\section{Introduction}\label{sec:1}

The Stern sequence, also known as Stern's diatomic sequence, is one of the most
remarkable integer sequences in number theory and combinatorics. It can be
defined by $s(0)=0$, $s(1)=1$, and for $n\geq 1$ by
\begin{equation}\label{1.1}
s(2n) = s(n),\qquad s(2n+1) = s(n) + s(n+1).
\end{equation}
The first 20 Stern numbers, starting with $n=1$, are listed in 
Table~1. Numerous properties and references can be found in \cite{CW},
\cite[A002487]{OEIS}, or \cite{Re}. Perhaps the most remarkable properties are
the facts that the terms $s(n)$, $s(n+1)$ are always relatively prime, and that
each positive reduced rational number occurs once and only once in the sequence
$\{s(n)/s(n+1)\}_{n\geq 1}$.

In two papers published in 2007, the Stern sequence was extended to two quite
different sequences of polynomials, both with interesting and useful 
properties: one by Klav{\v z}ar, Milutinovi\'c and Petr \cite{KMP}, and the 
other by the second author and K.~B.~Stolarsky \cite{DS1}. In this paper we 
only consider the sequence introduced in \cite{DS1}; it is defined 
by $s(0;x)=0$, $s(1;x)=1$, and for $n\geq 1$ recursively by
\begin{align}
s(2n;x) &= s(n;x^2), \label{1.2} \\
s(2n+1;x) &= x\,s(n;x^2) + s(n+1;x^2).\label{1.3}
\end{align}
The first few of these Stern polynomials are listed in Table~1; numerous 
properties can be found in \cite{DS1} and \cite{DS2}. Here we only repeat the 
obvious properties that $s(n;0)=1$ for $n\geq 1$, and
\begin{equation}\label{1.4}
s(n;1) = s(n),\qquad s(2^n;x) = 1 \quad (n\geq 0).
\end{equation}
To give an expression for the degree of $s(n;x)$, let $\nu(n)$ be the 2-adic
valuation of $n$, that is, $2^{\nu(n)}$ is the 
highest power of $2$ dividing $n$. Then for $n\geq 1$ we have
\begin{equation}\label{1.5}
\deg s(n;x) = \frac{n-2^{\nu(n)}}{2},\qquad \deg s(2n+1;x) = n.
\end{equation}
Another important property of these Stern polynomials is that
they are $(0,1)$-polynomials (or {\it Newman polynomials\/}),
which is not the case for the Stern polynomials of Klav{\v z}ar et al.

\bigskip
\begin{center}
\begin{tabular}{|r|c|l||r|c|l|}
\hline
$n$ & $s(n)$ & $s(n;x)$ & $n$ & $s(n)$ & $s(n;x)$ \\
\hline
1 & 1 & 1 & 11 & 5 & $1+x+x^3+x^4+x^5$ \\
2 & 1 & 1 & 12 & 2 & $1+x^4$ \\
3 & 2 & $1+x$ & 13 & 5 &$1+x+x^2+x^5+x^6$ \\
4 & 1 & 1 & 14 & 3 & $1+x^2+x^6$\\
5 & 3 & $1+x+x^2$ & 15 & 4 & $1+x+x^3+x^7$  \\
6 & 2 & $1+x^2$ & 16 & 1 & 1 \\
7 & 3 & $1+x+x^3$ & 17 & 5 & $1+x+x^2+x^4+x^8$ \\
8 & 1 & 1 & 18 & 4 & $1+x^2+x^4+x^8$ \\
9 & 4 & $1+x+x^2+x^4$ & 19 & 7 & $1+x+x^3+x^4+x^5+x^8+x^9$  \\
10 & 3 & $1+x^2+x^4$ & 20 & 3 & $1+x^4+x^8$ \\ 
\hline
\end{tabular}

\medskip
{\bf Table~1}:\label{tab:1} $s(n)$ and $s(n;x)$, $1\leq n\leq 20$.
\end{center}
\bigskip

It is the main purpose of this paper to explore a new property of the Stern
polynomials $s(n;x)$. This is best described by beginning with the infinite
identity matrix; for each $n\geq 1$ we put the coefficients of $s(n;x)$ into
row $n$, starting with the constant coefficient 1 and going left from the main
diagonal, which is possible by \eqref{1.5}. All remaining entries in this row 
are set to 0. We denote the resulting infinite matrix by $R$, and let 
$R_N$ denote the submatrix consisting of the first $N$ rows and columns; 
see \eqref{1.6} for the matrix $R_{10}$.

\begin{equation}\label{1.6}
R_{10}:=
\begin{pmatrix}
1 & & & & & & & & &   \\
\cdot & 1 & & & & & & & &  \\
\cdot & 1 & 1 & & & & & & & \\
\cdot & \cdot & \cdot & 1 & & & & & &  \\
\cdot & \cdot & 1 & 1 & 1 & & & & & \\
\cdot & \cdot & \cdot & 1 & \cdot & 1 & & & & \\
\cdot & \cdot & \cdot & 1 & \cdot & 1 & 1 & & & \\
\cdot & \cdot & \cdot & \cdot & \cdot & \cdot & \cdot & 1 & & \\
\cdot & \cdot & \cdot & \cdot & 1 & \cdot & 1 & 1 & 1 &  \\
\cdot & \cdot & \cdot & \cdot & \cdot & 1 & \cdot & 1 & \cdot & 1  \\
\end{pmatrix}.
\end{equation}

Here and in all later matrices the dots stand for zeros, and $R$ and all 
$R_N$ are, by definition, lower triangular. We can also consider a closely
related polynomial sequence $r(n;x)$ formed by taking the reciprocal
polynomial of $s(n;x)$ and multiplying by an appropriate power of $x$ to get
degree $n$. In other words, we define
\begin{equation}\label{1.7}
r(n;x) := x^ns(n;\tfrac{1}{x}).
\end{equation}
Then row $n$ of the lower-triangular part of the matrix $R$ contains the
coefficients of $r(n;x)$, with leading coefficient on the diagonal.

By construction, the matrices $R$ and $R_N$ are invertible, and obviously the 
inverses $R^{-1}$ and $(R_N)^{-1}$ are again lower triangular with integer 
entries. However, it is rather surprising that the only entries that occur 
seem to be 0, 1, and $-1$; see \eqref{1.8}.

\begin{equation}\label{1.8}
(R_{10})^{-1}:=
\begin{pmatrix}
1 & & & & & & & & &   \\
\cdot & 1 & & & & & & & &  \\
\cdot & -1 & 1 & & & & & & & \\
\cdot & \cdot & \cdot & 1 & & & & & &  \\
\cdot & 1 & -1 & -1 & 1 & & & & & \\
\cdot & \cdot & \cdot & -1 & \cdot & 1 & & & & \\
\cdot & \cdot & \cdot & \cdot & \cdot & -1 & 1 & & & \\
\cdot & \cdot & \cdot & \cdot & \cdot & \cdot & \cdot & 1 & & \\
\cdot & -1 & 1 & 1 & -1 & 1 & -1 & -1 & 1 &  \\
\cdot & \cdot & \cdot & 1 & \cdot & -1 & \cdot & -1 & \cdot & 1  \\
\end{pmatrix}.
\end{equation}

We also observe that all nonzero entries in the second and third lower 
diagonals are $-1$, and that in general all nonzero entries on a fixed diagonal 
have the same sign. The sign pattern is quite obvious in the penultimate row
in \eqref{1.8}. To make it even more obvious, row 17 of 
$R^{-1}$, going to the main diagonal, is

\begin{center}
\begin{tabular}{ccccccccccccccccc}
0 & 1 & $-1$ & $-1$ & 1 & $-1$ & 1 & 1 & $-1$ & $-1$ & 1 & 1 & $-1$ & 1 & $-1$ & $-1$ & 1 
\end{tabular}.
\end{center}

When read from right to left, this is reminiscent of the Prouhet-Thue-Morse 
sequence $(t_n)$ which can be defined by $t_0=0$ and
\begin{equation}\label{1.9}
t_{2n}=t_n,\qquad t_{2n+1}=1-t_n.
\end{equation}
The first 16 terms are $0\,1\,1\,0\,1\,0\,0\,1\,1\,0\,0\,1\,0\,1\,1\,0$. There
are several other ways of generating this sequence; see, e.g.,
\cite[Sect.~1.6, 5.1]{AS2}.

We are now ready to state our main result. As usual, $(R^{-1})_{n,k}$, for
$n\geq 1$ and $k\geq 1$, denotes the entry in row $n$ and column $k$ in the
matrix $R^{-1}$. 

\begin{theorem}\label{thm:1}
The infinite lower-triangular matrix $R^{-1}$ has only entries $0, 1$, $-1$.
In particular, $(R^{-1})_{1,1}=1$, and for $n\geq 2$ and $1\leq k\leq n$ we
have
\begin{equation}\label{1.10}
\left|(R^{-1})_{n,k}\right| \equiv\binom{2n-k-1}{n-k}\pmod{2},
\end{equation}
and the sign of $(R^{-1})_{n,k}$ is $(-1)^{t_{n-k}}$, where $(t_n)$ is the
Prouhet-Thue-Morse sequence.
\end{theorem}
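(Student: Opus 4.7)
The plan is to guess the inverse from the formula stated in the theorem: define the candidate matrix $T$ by $T_{n,k} = 0$ for $k > n$ and
\[
T_{n,k} := (-1)^{t_{n-k}}\Bigl[\tbinom{2n-k-1}{n-k}\bmod 2\Bigr] \qquad (1\le k\le n),
\]
and then verify directly that $RT = I$ by strong induction on $n$. The two ingredients needed are matching parity recurrences for the entries of $R$ and of $T$.

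Since row $n$ of $R$ encodes $s(n;x)$ with constant term on the diagonal, so that $R_{n,j}$ is the coefficient of $x^{n-j}$ in $s(n;x)$, the defining relations \eqref{1.2}-\eqref{1.3} translate directly into
\[
R_{2n,j}=\begin{cases}R_{n,j/2}&j\text{ even},\\0&j\text{ odd},\end{cases}\qquad
R_{2n+1,j}=\begin{cases}R_{n,j/2}&j\text{ even},\\R_{n+1,(j+1)/2}&j\text{ odd}.\end{cases}
\]
Splitting $(n,k)$ into the four parity combinations and applying Lucas' theorem bit by bit to $\tbinom{2n-k-1}{n-k}\bmod 2$, together with the Thue-Morse relation \eqref{1.9}, should yield the companion recurrences
\[
T_{2i,2\ell}=T_{i,\ell},\qquad T_{2i,2\ell+1}=0,\qquad T_{2i+1,2\ell}=-T_{i+1,\ell+1},\qquad T_{2i+1,2\ell+1}=T_{i+1,\ell+1}.
\]
The vanishing case is forced by a $\tbinom{0}{1}=0$ factor at the last bit; in the three nonzero cases the last-bit Lucas factor is $1$ and the remaining bits reduce cleanly to the required smaller binomial. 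The minus sign in the third formula comes from $t_{2m+1}=1-t_m$.

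Feeding both sets of recurrences into $(RT)_{n,k}=\sum_j R_{n,j}T_{j,k}$ and splitting on parities, three of the four cases reduce at a single step to one instance of the inductive hypothesis. For example, when $n=2m$ and $k=2\ell$ only even $j$ contribute and the sum collapses to $\sum_i R_{m,i}T_{i,\ell}=\delta_{m,\ell}=\delta_{n,k}$. The delicate case is $n=2m+1$, $k=2\ell$, where contributions come from both row $m$ and row $m+1$ of $R$; after applying the $T$-recurrences the sum becomes $\delta_{m,\ell}-\delta_{m+1,\ell+1}$, which vanishes identically because the two Kronecker deltas are always equal.

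The main obstacle is the bookkeeping in the $T$-recurrence: one must verify that for odd $n=2i+1$ the Lucas reduction shifts the indices by $+1$, landing on $T_{i+1,\ell+1}$ rather than on $T_{i,\ell}$, since it is precisely this shift that produces the cancellation $\delta_{m,\ell}=\delta_{m+1,\ell+1}$ in the delicate case and mirrors the analogous $+1$ shift in the odd-row recurrence for $R$. Once this shift is correctly identified, the argument closes, with the base case $n=1$ immediate from $R_{1,1}=T_{1,1}=1$.
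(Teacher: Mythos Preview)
Your plan is correct, and it takes a genuinely different route from the paper's.

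The paper proceeds in two separate stages. First, it introduces the \emph{integer} Pascal-type matrix $P$ with $P_{n,k}=\binom{k-1}{n-k}$, computes its inverse explicitly (Theorem~\ref{thm:2}) via a nontrivial binomial-sum identity, and then reduces that identity modulo~$2$ to obtain \eqref{1.10}. Second, for the signs, it encodes the conjectured rows of $R^{-1}$ and the columns of $R$ as polynomials $p_n(x)$ and $q_k(x)$, factors both as products of terms $1\pm x^{2^i}$ (Lemmas~\ref{lem:4} and~\ref{lem:5}), and then argues case by case on the $2$-adic valuation of $n-k$ that the coefficient of $x^{n-k}$ in $p_n(x)q_k(x)$ vanishes for $k<n$.

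Your approach handles absolute value and sign simultaneously and is considerably more elementary: it needs only Lucas' theorem one bit at a time and the defining recursion \eqref{1.9}, bypassing both the integer matrix $P$ and the polynomial factorisations. I checked your four $T$-recurrences and all four parity cases of $(RT)_{n,k}$; they go through exactly as you describe, including the critical $+1$ index shift $T_{2i+1,2\ell+1}=T_{i+1,\ell+1}$ and $T_{2i+1,2\ell}=-T_{i+1,\ell+1}$ that makes the delicate case collapse to $\delta_{m,\ell}-\delta_{m+1,\ell+1}=0$. The only minor points worth spelling out when you write it up are that (i) since $R$ is lower-triangular with $1$'s on the diagonal, the one-sided identity $RT=I$ already forces $T=R^{-1}$, and (ii) the strong induction is on the row index $n$, with $n=2m+1$ calling on rows $m$ and $m+1$, both strictly smaller once $n\ge 3$. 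What the paper's longer route buys is the explicit formula for $P^{-1}$ and the polynomial factorisations, which it reuses for the Catalan-number corollary and for the row/column/antidiagonal sums in Section~\ref{sec:4}; your argument trades those byproducts for a shorter, self-contained proof of Theorem~\ref{thm:1} itself.
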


We prove \eqref{1.10} by first considering a shifted Pascal matrix $P$ and its 
inverse; this will be done in 
Section~\ref{sec:2}. In Section~\ref{sec:3} we complete the proof of 
Theorem~\ref{thm:1}, using factors of certain polynomials. We explore the row, 
column, and antidiagonal sums of the matrices $R$ and $R^{-1}$ in 
Section~\ref{sec:4}.
The row sums of a matrix formed by a Hadamard product leads to a connection 
with compositions of integers; we investigate this in Section~\ref{sec:5}.
Another related infinite matrix $S$, based on the Sierpi\'nski gasket, and its 
inverse occur in Section~\ref{sec:6}, and we conclude this paper with some 
further remarks and results in Section~\ref{sec:7}. 

\section{A Pascal-type Matrix}\label{sec:2}

It has been known for some time, at least since Carlitz's paper \cite{Ca1} of
1960, that there 
is a close connection between the Stern sequence $\big(s(n)\big)$ and binomial
coefficients. In fact, Carlitz showed that for a fixed $n\geq 0$, the number of
odd binomial coefficients $\binom{n-k}{k}$ is given by $s(n+1)$. This indicates
that it will be of interest to consider binomial coefficients (mod~ 2), and
we use the notation
\begin{equation}\label{2.1}
\binom{n}{k}^*\in\{0,1\},\quad\hbox{where}\quad 
\binom{n}{k}^*\equiv\binom{n}{k}\pmod{2}.
\end{equation}
Then Carlitz's result can be rewritten as
\begin{equation}\label{2.2}
s(n+1)=\sum_{k=0}^{\lfloor\frac{n}{2}\rfloor}\binom{n-k}{k}^*;
\end{equation}
this was also obtained in \cite[p.~319]{GG}. A polynomial analogue of this 
identity was given in \cite{DS1} in terms of the Chebyshev polynomials $U_n(x)$.
A well-known explicit expansion of $U_n(x)$ (see, e.g., 
\cite[Eqn.~(2.16)]{Ca1}) then gives 
\begin{equation}\label{2.3}
s(n+1;x)=\sum_{k=0}^{\lfloor\frac{n}{2}\rfloor}\binom{n-k}{k}^*x^k,
\end{equation}
which was first stated in this form in \cite{DE}, with applications and
extensions. With \eqref{1.7} we now have
\begin{equation}\label{2.4}
r(n;x)=\sum_{k=0}^{\lfloor\frac{n-1}{2}\rfloor}\binom{n-1-k}{k}^*x^{n-k},
\end{equation}
and according to the definition of $r(n;x)$ and the matrix $R$, the 
coefficient on the right of \eqref{2.4} is the entry in row $n$ and
column $n-k$ of $R$.

We now consider the ``non-starred" version of the right-hand side of 
\eqref{2.4}, namely the polynomials
\begin{equation}\label{2.5}
\rho(n;x):=\sum_{k=0}^{\lfloor\frac{n-1}{2}\rfloor}\binom{n-1-k}{k}x^{n-k}
= \sum_{k=1}^n\binom{k-1}{n-k}x^k, 
\end{equation}
where, as usual, we let $\binom{n}{m}=0$ when $0\leq n<m$. We then form an
infinite lower-triangular matrix $P$ from the coefficients of $\rho(n;x)$ and
let $P_N$ be the corresponding $N\times N$ submatrix. Then it is clear
from the right-most term in \eqref{2.5} that column $k$ of $P$ is row
$k-1$ of Pascal's triangle moved downward by $k-1$ places; see the
matrix $P_8$ in \eqref{2.6}.

\begin{equation}\label{2.6}
P_8:=
\begin{pmatrix}
1 & & & & & & &  \\
\cdot & 1 & & & & & & \\
\cdot & 1 & 1 & & & & & \\
\cdot & \cdot & 2 & 1 & & & & \\
\cdot & \cdot & 1 & 3 & 1 & & & \\
\cdot & \cdot & \cdot & 3 & 4 & 1 & & \\
\cdot & \cdot & \cdot & 1 & 6 & 5 & 1 & \\
\cdot & \cdot & \cdot & \cdot & 4 & 10 & 6 & 1 \\
\end{pmatrix}.
\end{equation}

We mention in passing that other types of Pascal matrices have been studied;
see, e.g., \cite{BP}. The next obvious step is to consider the
inverses of the matrices $P$ and $P_N$. We are going to prove the following
result.

\begin{theorem}\label{thm:2}
The inverse of the matrix $P$ is the lower-triangular integer matrix given by
$(P^{-1})_{1,1}=1$, $(P^{-1})_{n,1}=0$ for $n>1$, and
\begin{equation}\label{2.7}
\left(P^{-1}\right)_{n,k}=(-1)^{n-k}\frac{k-1}{2n-k-1}\binom{2n-k-1}{n-k},\quad
2\leq k\leq n.
\end{equation}
\end{theorem}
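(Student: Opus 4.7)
The plan is to realise $P$ as a linear operator on formal power series and then invert that operator explicitly via Lagrange inversion.

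From the rightmost expression in \eqref{2.5}, the $k$-th column of $P$ has generating function
\[
f_k(x):=\sum_{n\ge 1}P_{n,k}x^n = x^k(1+x)^{k-1}=x\cdot h(x)^{k-1},\qquad h(x):=x(1+x).
\]
A short calculation then shows that multiplication of a column vector by $P$ corresponds, at the level of generating functions, to the operator $v(x)\mapsto v(h(x))/(1+x)$.

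To invert this operator I would introduce the compositional inverse $\bar h(y)$ of $h$, which exists because $h(0)=0$ and $h'(0)=1$. The defining relation $\bar h(y)(1+\bar h(y))=y$ rearranges to $1+\bar h(y)=y/\bar h(y)$. Solving $v(h(x))=(1+x)w(x)$ with $w=e_k$ (whose generating function is $x^k$) then shows that the $k$-th column of $P^{-1}$ has generating function
\[
q_k(y)=(1+\bar h(y))\,\bar h(y)^k = y\,\bar h(y)^{k-1}.
\]
In particular $q_1(y)=y$, which confirms the boundary case $k=1$ asserted in the theorem; one may observe in passing that $\bar h(y)=yC(-y)$, where $C$ is the Catalan generating function, explaining conceptually why Catalan-like coefficients will appear.

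For the closed form I would invoke the Lagrange inversion formula. The crucial simplification is that $x/h(x)=1/(1+x)$, which makes the coefficient extraction essentially mechanical:
\[
[y^m]\bar h(y)^j=\frac{j}{m}\bigl[x^{m-j}\bigr](1+x)^{-m}=\frac{j}{m}(-1)^{m-j}\binom{2m-j-1}{m-j}.
\]
Applying this with $m=n-1$ and $j=k-1$, and then using the elementary rewrite $\frac{k-1}{n-1}\binom{2n-k-2}{n-k}=\frac{k-1}{2n-k-1}\binom{2n-k-1}{n-k}$, produces \eqref{2.7}. The main obstacle is purely bookkeeping: the paper's $1$-based matrix indexing has to be tracked carefully through the compositional inversion and the Lagrange extraction, but conceptually there is nothing beyond identifying the column generating functions and inverting a power series composition.
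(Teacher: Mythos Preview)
Your argument is correct and takes a genuinely different route from the paper's proof. The paper proceeds by \emph{verification}: it assumes the formula \eqref{2.7}, writes out the $(n,k)$ entry of $P^{-1}P$ as a binomial sum, and shows that this sum vanishes for $k<n$, either by computer algebra or by splitting the sum and invoking a known identity from Gould's tables. Your approach is instead \emph{constructive}: you recognise that the column generating function $x^k(1+x)^{k-1}=x\,h(x)^{k-1}$ makes $P$ act as the substitution-and-rescaling operator $V(x)\mapsto V(h(x))/(1+x)$, invert this via the compositional inverse $\bar h$, and then read off the entries with Lagrange inversion. The payoff of your method is that it derives \eqref{2.7} rather than merely checking it, and it makes transparent why Catalan-type numbers appear (since $\bar h(y)=yC(-y)$); the paper's method, by contrast, requires no generating-function machinery and stays entirely within finite binomial manipulations, which keeps it closer in spirit to the mod~$2$ arguments used elsewhere in the paper. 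Both are complete; yours is the more conceptual, the paper's the more self-contained.
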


All the entries in \eqref{2.7} are integers since the integer matrices $P_N$
have determinant 1.

\begin{proof}[Proof of Theorem~\ref{thm:2}]
We need to show that the product of the matrices $P^{-1}$ and $P$ is indeed the
infinite identity matrix. Since the order does not matter, we consider 
$P^{-1}P$ and recall that, in addition to \eqref{2.7}, by \eqref{2.5} we have
\begin{equation}\label{2.9}
P_{j,k} = \binom{k-1}{j-k}.
\end{equation}
To obtain the matrix product, we fix a row $n\geq 1$ and a column $k\geq 1$, 
and consider the sum
\begin{equation}\label{2.10}
\left(P^{-1}P\right)_{n,k} = \sum_{j=k}^n(P^{-1})_{n,j}P_{j,k}.
\end{equation}
First, when $n<k$, then all summands in \eqref{2.10} are 0. Similarly, the sum
is clearly 1 when $n=k$. In the nontrivial case $k<n$, with \eqref{2.7} and 
\eqref{2.9}, the sum in \eqref{2.10} becomes
\begin{align*}
\left(P^{-1}P\right)_{n,k} 
&=\sum_{j=k}^n(-1)^{n-j}\frac{j-1}{2n-j-1}\binom{2n-j-1}{n-j}\binom{k-1}{j-k}\\
&=\sum_{j=0}^{n-k}(-1)^j\frac{n-j-1}{n+j-1}\binom{n+j-1}{j}\binom{k-1}{n-j-k},
\end{align*}
where we have replaced $j$ by $n-j$. Using computer algebra, for instance
Maple or Mathematica, we find that the latter sum is 0, which completes the
proof.

Alternatively, we can rewrite the sum in question as
\[
\left(P^{-1}P\right)_{n,k} = \frac{(k-1)!(2n-2k-1)!}{(n-1)!(n-k)!}
\sum_{j=0}^{n-k}(-1)^j\binom{n-k}{j}\binom{n+j-2}{2n-2k-1}(n-j-1),
\]
and if we write $n-j-1=(n+j-1)-2j$, then the sum on the right becomes
\begin{align*}
\sum_{j=0}^{n-k}(-1)^j&\binom{n-k}{j}\binom{n+j-2}{2n-2k-1}(n+j-1)\\
&-\sum_{j=0}^{n-k}(-1)^j\binom{n-k}{j}\binom{n+j-2}{2n-2k-1}(2j)=:S_1-S_2.
\end{align*}
After some further straightforward manipulations, we get
\begin{align*}
S_1 &=2(n-k)\sum_{j=0}^{n-k}(-1)^j\binom{n-k}{j}\binom{n+j-1}{2n-2k},\\
S_2 &=-2(n-k)\sum_{j=0}^{n-k-1}(-1)^j\binom{n-k-1}{j}\binom{n+j-1}{2n-2k-1}.
\end{align*}
Both these sums are instances of a known binomial identity, namely Equation 
(3.47) in \cite{Go}, which gives
\[
S_1= S_2 = 2(n-k)(-1)^{n-k}\binom{n-1}{n-k}.
\]
This shows, once again, that $\left(P^{-1}P\right)_{n,k}=0$ whenever
$1\leq k<n$.
\end{proof}

As a consequence of Theorem~\ref{thm:2} we get the first part of
Theorem~\ref{thm:1}, namely \eqref{1.10}. We can see this by
reducing the matrix identity $P_N\cdot(P_N)^{-1}=I_N$ modulo 2, for all $N$.
(As an illustration, compare \eqref{2.8} with \eqref{1.8}.) To reduce $P^{-1}$
to $R^{-1}$ (\eqref{2.7} to \eqref{1.10}), it is easy to see that both cases
agree when $k=1$. For $2\leq k\leq n-1$ we have 
\begin{align*}
\frac{k-1}{2n-k-1}&\binom{2n-k-1}{n-k}-\binom{2n-k-1}{n-k}
=\frac{k-1-(2n-k-1)}{2n-k-1}\binom{2n-k-1}{n-k}\\
&= -2\cdot\frac{n-k}{2n-k-1}\binom{2n-k-1}{n-k}
= -2\cdot\binom{2n-k-2}{n-k-1},
\end{align*}
where in the last equation we have used a well-known identity for binomial
coefficients. Hence
\[
\frac{k-1}{2n-k-1}\binom{2n-k-1}{n-k} \equiv \binom{2n-k-1}{n-k}\pmod{2},
\]
as desired. This last congruence is also trivially true for $k=n$.

\begin{equation}\label{2.8}
(P_8)^{-1}=
\begin{pmatrix}
1 & & & & & & & \\
\cdot & 1 & & & & & & \\
\cdot & -1 & 1 & & & & & \\
\cdot & 2 & -2 & 1 & & & & \\
\cdot & -5 & 5 & -3 & 1 & & & \\
\cdot & 14 & -14 & 9 & -4 & 1 & & \\
\cdot & -42 & 42 & -28 & 14 & -5 & 1 & \\
\cdot & 132 & -132 & 90 & -48 & 20 & -6 & 1 \\
\end{pmatrix}.
\end{equation}

We notice that the second and third columns of the matrix in \eqref{2.8}
are, up to sign, identical from the third row on. In fact, we have the 
following.

\begin{corollary}\label{cor:3}
For all $n\geq 3$,
\begin{equation}\label{2.8a}
\left(P^{-1}\right)_{n,2} = -\left(P^{-1}\right)_{n,3} = (-1)^nC_{n-2},
\end{equation}
and for all $n\geq 2$,
\begin{equation}\label{2.8b}
\sum_{k=2}^n\left|\left(P^{-1}\right)_{n,k}\right| = C_{n-1},
\end{equation}
where $C_n$ is the $n$th Catalan number, defined by 
$C_n=\frac{1}{n+1}\binom{2n}{n}$.
\end{corollary}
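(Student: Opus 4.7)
Identity \eqref{2.8a} is a direct specialization of Theorem~\ref{thm:2}. Setting $k=2$ in \eqref{2.7} gives $(P^{-1})_{n,2}=(-1)^n\,\frac{1}{2n-3}\binom{2n-3}{n-2}$, and a short factorial manipulation identifies the right-hand side with $(-1)^n C_{n-2}$. The case $k=3$ is handled identically: the factor $\frac{k-1}{2n-k-1}$ becomes $\frac{2}{2n-4}=\frac{1}{n-2}$, and the same rewriting $\frac{1}{n-2}\binom{2n-4}{n-3}=\frac{(2n-4)!}{(n-1)!(n-2)!}=C_{n-2}$ yields the claimed value with opposite sign.

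The real content is the absolute-row-sum identity \eqref{2.8b}. My plan is to reindex by $j=n-k$, rewriting the claim as
\[
\sum_{j=0}^{n-2}\frac{n-j-1}{n+j-1}\binom{n+j-1}{j}=C_{n-1}.
\]
The key algebraic move is the splitting
\[
\frac{n-j-1}{n+j-1}\binom{n+j-1}{j}=\binom{n+j-1}{j}-2\binom{n+j-2}{j-1},
\]
which follows from the elementary identity $\frac{j}{n+j-1}\binom{n+j-1}{j}=\binom{n+j-2}{j-1}$. With this splitting, the hockey-stick identity $\sum_{i=0}^{m}\binom{r+i}{i}=\binom{r+m+1}{m}$ applies to each piece, collapsing the whole sum to $\binom{2n-2}{n-2}-2\binom{2n-3}{n-3}$. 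One application of Pascal's rule identifies this with $\binom{2n-3}{n-2}-\binom{2n-3}{n-3}$, which equals $C_{n-1}$ by the classical identity $C_m=\binom{2m-1}{m-1}-\binom{2m-1}{m-2}$ (taking $m=n-1$).

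The only step that is not entirely routine is spotting the splitting above; everything else is a standard binomial manipulation, and the edge case $n=2$ causes no trouble since the second sum is empty and $\binom{1}{-1}=0$. A more conceptual alternative would use generating functions: since $|(P^{-1})_{n,k}|=(-1)^{n-k}(P^{-1})_{n,k}$ and $(P^{-1})_{n,1}=0$ for $n\geq 2$, the claim is equivalent to $\sum_{k=1}^{n}(-1)^{n-k}\binom{k-1}{n-k}C_{k-1}=1$ for all $n\geq 1$. Multiplying by $x^n$ and summing over $n$ collapses this to $A(x(1-x))=x$, where $A(x)=\tfrac{1}{2}(1-\sqrt{1-4x})$ is the Catalan generating function---precisely the defining functional equation $A=x+A^2$ in disguise.
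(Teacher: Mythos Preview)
Your proof is correct. For \eqref{2.8a} you do exactly what the paper does: specialize \eqref{2.7} to $k=2,3$ and simplify. For \eqref{2.8b} the paper also reindexes by $j=n-k$ and splits the fraction, but via the complementary decomposition $\frac{n-j-1}{n+j-1}=\frac{2(n-1)}{n+j-1}-1$ rather than your $1-\frac{2j}{n+j-1}$; it then cites two identities from Gould's tables, $\sum_{k=0}^{m}\binom{m+k}{k}=\binom{2m+1}{m+1}$ and $\sum_{k=0}^{m}\frac{1}{m+k}\binom{m+k}{k}=\frac{1}{m}\binom{2m}{m}$, to collapse the two pieces. Your route is marginally more self-contained, needing only the hockey-stick identity twice together with Pascal's rule and the standard difference form $C_m=\binom{2m-1}{m-1}-\binom{2m-1}{m-2}$ of the Catalan number. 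The generating-function remark at the end is a pleasant bonus not present in the paper.
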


\begin{proof}
The identities in \eqref{2.8a} are easy to verify by writing the binomial 
coefficient in \eqref{2.7}, with $k=2$ and $k=3$, in terms of factorials.

To prove \eqref{2.8b}, we use the known combinatorial identities
\[
\sum_{k=0}^n\binom{n+k}{k}=\binom{2n+1}{n+1},\qquad
\sum_{k=0}^n\frac{1}{n+k}\binom{n+k}{k}=\frac{1}{n}\binom{2n}{n},
\]
which can be found, for instance, in \cite{Go} as special cases of (1.49) and
(1.50), respectively. With these we obtain
\begin{align*}
\sum_{k=0}^n\frac{n-k}{n+k}\binom{n+k}{k}
&=\sum_{k=0}^n\frac{2n}{n+k}\binom{n+k}{k}-\sum_{k=0}^n\binom{n+k}{k} \\
&=2\cdot\binom{2n}{n}-\frac{2n+1}{n+1}\binom{2n}{n}
=\frac{1}{n+1}\binom{2n}{n} = C_n.
\end{align*}
Finally, with \eqref{2.7} we get upon replacing $k$ by $n-k$,
\[
\sum_{k=2}^n\left|\left(P^{-1}\right)_{n,k}\right|
=\sum_{k=0}^{n-1}\frac{n-k-1}{n+k-1}\binom{n+k-1}{k} = C_{n-1}.
\]
Keeping in mind that the term for $k=1$ on the left vanishes when $n\geq 2$,
this proves \eqref{2.8b}.
\end{proof}

In concluding this section we remark that by \eqref{2.10} it is clear that the
inverse of each finite matrix $P_N$ is the corresponding $N\times N$ submatrix
of $P^{-1}$. We also note that very recently Qi et al. \cite{QZG} studied what
amounts to the matrices $P_N$ and also obtained their inverses. However, the
approach and methods of proof in \cite{QZG} are quite different from ours.

\section{Proof of the Second Part of Theorem~\ref{thm:1}}\label{sec:3}

Our way of proving the remainder of Theorem~\ref{thm:1} consists of
constructing polynomials from the conjectured rows of $R^{-1}$ and the columns
of $R$. We define
\begin{equation}\label{3.0a}
p_n(x):=\sum_{i=0}^{n-1}\left(R^{-1}\right)_{n,n-i}x^{i},
\end{equation}
that is, the coefficient sequence is row $n$ of $R^{-1}$, with constant 
coefficient on the main diagonal. Second, we define
\begin{equation}\label{3.0b}
q_k(x):=\sum_{i=0}^{k-1}R_{k+i,k}x^{i},
\end{equation}
that is, the coefficient sequence is column $k$ of $R$, with constant 
coefficient on the main diagonal.

The proof of Theorem~\ref{thm:1} is complete if we can show that $R^{-1}R=I$,
the infinite identity matrix. Our main tool for achieving this is to write
both polynomials $p_n(x)$ and $q_k(x)$ as products. We begin with the easier
case.

\begin{lemma}\label{lem:4}
If the integer $k\geq 1$ is such that $k-1$ has the binary expansion 
$k-1=\sum_{i\geq 0}u_i2^i$, then
\begin{equation}\label{3.1a}
q_k(x) = \prod_{i\geq 0}\big(1+x^{2^i}\big)^{u_i}.
\end{equation}
\end{lemma}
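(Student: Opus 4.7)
The plan is to first identify the entries of column $k$ of $R$ in closed form, recognize $q_k(x)$ as the mod-$2$ reduction of $(1+x)^{k-1}$, and then factor this via the Frobenius identity $(1+x)^{2^i}\equiv 1+x^{2^i}\pmod{2}$.

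First I would read off the entries of column $k$. From the construction of $R$ together with \eqref{2.4}, the entry of $R$ in row $n$ and column $n-j$ is $\binom{n-1-j}{j}^*$; equivalently $R_{n,n-j}=[x^j]s(n;x)$ by \eqref{2.3}. Setting $n=k+i$ and $n-j=k$, so that $j=i$, gives
\[
R_{k+i,k}=\binom{k-1}{i}^{*},\qquad 0\le i\le k-1.
\]
Consequently
\[
q_k(x)=\sum_{i=0}^{k-1}\binom{k-1}{i}^{*}x^i,
\]
which is precisely the coefficient-wise reduction mod $2$ of $(1+x)^{k-1}$.

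Next I would apply the Frobenius (``freshman's dream'') congruence $(1+x)^{2^i}\equiv 1+x^{2^i}\pmod 2$ repeatedly. Writing $k-1=\sum_{i\ge 0}u_i2^i$ with $u_i\in\{0,1\}$, this yields
\[
(1+x)^{k-1}=\prod_{i\ge 0}\bigl((1+x)^{2^i}\bigr)^{u_i}\equiv \prod_{i\ge 0}\bigl(1+x^{2^i}\bigr)^{u_i}\pmod 2.
\]
The key final observation is that this congruence is actually an identity over $\mathbb{Z}$: since each $u_i\in\{0,1\}$, the product on the right expands as $\sum_{S}x^{\sum_{i\in S}2^i}$, where $S$ ranges over subsets of $\{i:u_i=1\}$, and by uniqueness of binary expansions no two such subsets give the same exponent. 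Hence the product is already a $(0,1)$-polynomial, so its coefficients coincide with $\binom{k-1}{i}^{*}$ on the nose, which is exactly \eqref{3.1a}.

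There is no real obstacle here; the only thing requiring care is the bookkeeping in step one to confirm that the $i$-th entry of column $k$ of $R$ is indeed $\binom{k-1}{i}^{*}$, after which Lucas' theorem (in its polynomial incarnation) immediately supplies the factorization.
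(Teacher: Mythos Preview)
Your proof is correct and follows essentially the same approach as the paper: both identify $q_k(x)=\sum_{i}\binom{k-1}{i}^{*}x^{i}$ and then use the binary-digit structure of $k-1$ to obtain the product form, observing that the product is already a $(0,1)$-polynomial by uniqueness of binary expansions. The only cosmetic difference is that the paper invokes Lucas's congruence \eqref{3.3} directly on the coefficients, while you phrase the same fact via the Frobenius identity $(1+x)^{2^i}\equiv 1+x^{2^i}\pmod 2$.
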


\begin{proof}
We prove a shifted version of the lemma. By \eqref{2.9}, taken modulo 2, 
we have
\begin{equation}\label{3.1}
q_{k+1}(x) = \sum_{j=0}^{k}\binom{k}{j}^*x^j\quad(k\geq 0),
\end{equation}
with the starred binomial coefficient defined in \eqref{2.1}. We claim:
if $k\geq 0$ has the binary expansion $k=\sum_{i\geq 0}v_i2^i$, then
\begin{equation}\label{3.2}
q_{k+1}(x) = \prod_{i\geq 0}\big(1+x^{2^i}\big)^{v_i}.
\end{equation}
By a well-known congruence of Lucas, which holds for any prime modulus (see,
e.g., \cite{Gr}), we have
\begin{equation}\label{3.3}
\binom{k}{j}
\equiv\binom{v_d}{w_d}\cdots\binom{v_1}{w_1}\binom{v_0}{w_0}\pmod{2},
\end{equation}
where $j$ has the binary representation $j=w_d2^d+\cdots+w_12+w_0$.
Now, expanding the right-hand
side of \eqref{3.2}, we see that $q_{k+1}(x)$ is a $(0,1)$-polynomial.
Furthermore, $x^{j}$, $0\leq j\leq k$, has coefficient 1 exactly when the
nonzero binary digits of $j$ are a subset of the corresponding nonzero
binary digits of $k$. But this, by \eqref{3.3}, is equivalent to \eqref{3.1}, 
which completes the proof.
\end{proof}  

\bigskip
\begin{center}
\begin{tabular}{|r|l|l|}
\hline
$n$ & $p_n(x)$ & $q_n(x)$ \\
\hline
1 & 1 & 1 \\
2 & 1 & $1+x$ \\
3 & $1-x$ & $1+x^2$ \\
4 & 1 & $(1+x)(1+x^2)$ \\
5 & $(1-x)(1-x^2)$ & $1+x^4$ \\
6 & $1-x^2$ & $(1+x)(1+x^4)$ \\
7 & $1-x$ & $(1+x^2)(1+x^4)$ \\
8 & 1 & $(1+x)(1+x^2)(1+x^4)$ \\
9 & $(1-x)(1-x^2)(1-x^4)$ & $1+x^8$ \\
10 & $(1-x^2)(1-x^4)$ & $(1+x)(1+x^8)$ \\
\hline
\end{tabular}

\medskip
{\bf Table~2}:\label{tab:2} Factorizations of $p_n(x)$ and $q_n(x)$, $1\leq n\leq 10$.
\end{center}
\bigskip

The polynomial factorization \eqref{3.1a} is illustrated in Table~\ref{tab:2},
which also indicates that the polynomials $p_n(x)$ have a similar product 
representation.

\begin{lemma}\label{lem:5}
Given an integer $n\geq 2$, let $\alpha\geq 1$ be the integer such that
$2^{\alpha-1}+1\leq n\leq 2^{\alpha}$, and let
$2^{\alpha}-n=\sum_{i\geq 0}b_i2^i$, $b_i\in\{0,1\}$. Then
\begin{equation}\label{3.4}
p_{n}(x) = \prod_{i\geq 0}\big(1-x^{2^i}\big)^{b_i}.
\end{equation}
\end{lemma}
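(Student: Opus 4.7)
The plan is to prove Lemma~\ref{lem:5} by introducing the candidate polynomial $\hat p_n(x) := \prod_{i \ge 0}(1 - x^{2^i})^{b_i}$ (the right-hand side of \eqref{3.4}) and verifying that it coincides with $p_n(x)$. Since $R$ is invertible, the coefficients of $p_n$ are uniquely characterised by the linear conditions expressing $R^{-1}R = I$, and with the encoding in \eqref{3.0a}--\eqref{3.0b} these become the polynomial identities $[x^{n-k}]\,p_n(x)\,q_k(x) = \delta_{n,k}$ for $1 \le k \le n$. The case $k = n$ holds trivially for $\hat p_n$ since both factors have constant term~$1$; what remains is to verify, for $1 \le k < n$ and $r := n-k$, that $[x^r]\,\hat p_n(x)\,q_k(x) = 0$. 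Writing $B = \operatorname{supp}(m)$ with $m = 2^\alpha - n$ and $U = \operatorname{supp}(k-1)$, Lemma~\ref{lem:4} yields
\[
\hat p_n(x)\,q_k(x) \;=\; \prod_{j \in B}(1 - x^{2^j}) \prod_{j \in U}(1 + x^{2^j}).
\]

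Expanding the product as $\sum_{S \subseteq B,\, T \subseteq U}(-1)^{|S|}x^{\sigma(S)+\sigma(T)}$, I would exploit the key identity $(1-x^{2^j})(1+x^{2^j}) = 1 - x^{2^{j+1}}$ via the involution that swaps the membership of $j^\star = \min(B \cap U)$ between $S$ and $T$. This flips $(-1)^{|S|}$ while preserving $\sigma(S) + \sigma(T)$, so non-fixed pairs cancel. Iterating over all of $B \cap U$ reduces the computation to
\[
[x^r]\,\hat p_n(x)\,q_k(x) \;=\; \sum_{S \subseteq B \cap U}(-1)^{|S|}\,[x^{r-2\sigma(S)}]\,P_{\mathrm{end}}(x),
\]
where $P_{\mathrm{end}}(x) := \prod_{j \in B \setminus U}(1 - x^{2^j})\prod_{j \in U \setminus B}(1+x^{2^j})$. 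Since the factors of $P_{\mathrm{end}}$ now have pairwise disjoint bit-supports, $[x^t]P_{\mathrm{end}}$ is nonzero only when $\operatorname{supp}(t) \subseteq B \triangle U$.

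Thus it suffices to show that for every $S \subseteq B \cap U$ with $t := r - 2\sigma(S) \ge 0$, the support of $t$ is not contained in $B \triangle U$. Writing $T = \operatorname{supp}(t)$, $\bar S = (B \cap U) \setminus S$, and $A + c := \{i+c : i \in A\}$, and using $m + (k-1) + r = 2^\alpha - 1$ together with $\sigma(B) + \sigma(U) = \sigma(B \triangle U) + 2\sigma(B \cap U)$, the hypothesis $T \subseteq B \triangle U$ is equivalent to
\[
\sigma(T+1) + \sigma((B \triangle U) \setminus T) + \sigma(S+2) + \sigma(\bar S + 1) \;=\; 2^\alpha - 1.
\]
A position-by-position analysis, starting at position~$0$ where only $(B \triangle U) \setminus T$ can contribute, forces inductively $j \in (B \triangle U) \setminus T$ for every $j \in \{0,\dots,\alpha-1\}$, hence $B \cap U = \emptyset$, $T = \emptyset$, $S = \emptyset$, and so $r = 0$, contradicting $r > 0$. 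The main obstacle is this bit-by-bit induction: one must verify that no carries arise, which follows because the only way the shifts $T+1$, $S+2$, $\bar S + 1$ could contribute at position~$j$ is through an index of $T$, $S$, or $\bar S$ strictly below~$j$—but those indices would have to lie in $B \cap U$, which the induction has just excluded at each step.
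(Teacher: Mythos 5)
Your proposal is correct in substance, but it takes a genuinely different route from the paper. The paper's proof of Lemma~\ref{lem:5} starts from the coefficient formula \eqref{1.10} already established in Section~\ref{sec:2} via the Pascal matrix: it reduces $\binom{n+j-1}{j}$ to $\binom{2^{\alpha}-n}{j}$ modulo $2$ by a short factorial manipulation, invokes Lucas's congruence to obtain the product form up to signs, and then pins down the signs by matching against the Prouhet--Thue--Morse identity $\prod_{i\geq 0}(1-x^{2^i})=\sum_{j\geq 0}(-1)^{t_j}x^j$; the orthogonality statement \eqref{3.8a} is then proved \emph{afterwards}, by a separate divisibility and $2^{\beta}$-periodicity argument on the factors of $p_n q_k$. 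You instead characterize $p_n$ by the relations $[x^{n-k}]p_n(x)q_k(x)=\delta_{n,k}$ and verify them directly for the candidate product, via the identity $(1-x^{2^j})(1+x^{2^j})=1-x^{2^{j+1}}$ on $B\cap U$ followed by a bit-by-bit carry analysis of $m+(k-1)+r=2^{\alpha}-1$. This buys you independence from Section~\ref{sec:2} and from \eqref{1.10} (you need only Lemma~\ref{lem:4}), and it simultaneously absorbs the remainder of the proof of Theorem~\ref{thm:1} into the lemma; the price is a more intricate combinatorial verification where the paper's factor argument is slicker, and recovering the binomial form \eqref{1.10} from the product form would still require the congruence \eqref{3.6}. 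I checked the carry argument and it is sound (one needs degree bounds $B,U,T\subseteq\{0,\dots,\alpha-1\}$, which hold since $2^\alpha-n\leq 2^{\alpha-1}-1$ and $k-1,\,t<2^\alpha$); the only slip is in your final parenthetical justification: indices of $T$ strictly below $j$ are excluded because the induction places $\{0,\dots,j-1\}$ in $(B\bigtriangleup U)\setminus T$, hence outside $T$, not because they ``would have to lie in $B\cap U$'' (that reason applies only to $S$ and $\bar S$). This is cosmetic and does not affect the argument.
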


\begin{proof}
Let $\overline{p}_n(x)$ be the polynomial obtained from $p_n(x)$ by taking
the nonnegative residues (mod 2) of its coefficients.
From the definition of $p_n(x)$ and from \eqref{1.10} we have for $n\geq 2$, 
\begin{equation}\label{3.5}
\overline{p}_{n}(x) = \sum_{j=1}^n\binom{2n-j-1}{n-j}^*x^{n-j}
= \sum_{j=1}^n\binom{n+j-1}{j}^*x^j.
\end{equation}
Now set $b:=2^{\alpha}-n$ and note that $0\leq b\leq 2^{\alpha-1}-1$.
We claim that for $0\leq j\leq n-1$ we have
\begin{equation}\label{3.6}
\binom{n+j-1}{j} \equiv \binom{b}{j}\pmod{2}.
\end{equation}
To prove this, we expand the left-hand binomial
coefficient as
\begin{align*}
\binom{2^{\alpha}-b+j-1}{j}
&=\frac{(2^{\alpha}-b+j-1)(2^{\alpha}-b+j-2)\cdots(2^{\alpha}-b)}{1\cdot 2\cdots j}\\
&=(-1)^j\frac{\big((b-j+1)-2^{\alpha}\big)\big((b-j+2)-2^{\alpha}\big)\cdots
\big(b-2^{\alpha}\big)}{1\cdot 2\cdots j}.
\end{align*}
Since $j\leq n-1\leq 2^{\alpha}-1$, the terms $2^{\alpha}$ in the numerator
vanish modulo 2, and the last expression reduces to $\binom{b}{j}\pmod{2}$, as
claimed in \eqref{3.6}.

Then with \eqref{3.5} we get
\[
\overline{p}_{n}(x) = \sum_{j=1}^n\binom{b}{j}^*x^j = \sum_{j=1}^b\binom{b}{j}^*x^j,
\]
where in the second equation we used the fact that $b<2^{\alpha-1}\leq n$.
But now the same argument as in the proof of Lemma~\ref{lem:4}, based on 
\eqref{3.3}, shows that
\begin{equation}\label{3.7}
p_{n}(x) = \prod_{i\geq 0}\big(1\pm x^{2^i}\big)^{b_i},
\end{equation}
where the $b_i$ are the binary digits of $b=2^{\alpha}-n$.

To complete the proof, we note that the definition \eqref{1.9} of the 
Prouhet-Thue-Morse sequence implies
\begin{equation}\label{3.12}
\prod_{i\geq 0}\big(1-x^{2^i}\big) = \sum_{j\geq 0}(-1)^{t_j}x^j
= 1-x-x^2+x^3-x^4+x^5+\cdots;
\end{equation}
see also, for instance, \cite[Prop.~2]{AS1}. By definition of $p_n(x)$, its
sign pattern is the same as that in \eqref{3.12}, namely the coefficient of
$x^j$, if it is nonzero, is $(-1)^{t_j}$. Hence, comparing the left-hand side 
of \eqref{3.12} with \eqref{3.7} and using the uniqueness of binary expansions,
we see that the signs in \eqref{3.7} must all be negative. This gives 
\eqref{3.4}, and the proof is complete.
\end{proof}

We are now ready to finish the proof of Theorem~\ref{thm:1}.

\begin{proof}[Proof of Theorem~\ref{thm:1}, Part~2]
Multiplying \eqref{3.0a} by \eqref{3.0b}, we get
\begin{equation}\label{3.8}
p_n(x)q_k(x)=\sum_{i=0}^{n+k-2}\left(
\sum_{j=0}^{i}\left(R^{-1}\right)_{n,n-(i-j)}R_{k+j,k}\right)x^{i}.
\end{equation}
When $i=n-k$, the inner sum becomes
\[
\sum_{j=0}^{n-k}\left(R^{-1}\right)_{n,k+j}R_{k+j,k}
= \sum_{j=k}^{n}\left(R^{-1}\right)_{n,j}R_{j,k},
\]
and now we see that the right-hand side is the $(n,k)$th entry of the matrix
product $R^{-1}R$. When $k=n$, we get the product of the constant coefficients
of $p_n(x)$ and $q_n(x)$, namely 1. Therefore it remains to show that all 
other entries of $R^{-1}R$ vanish; by \eqref{3.8}, this means:
\begin{equation}\label{3.8a}
\hbox{If $1\leq k<n$, then the coefficient of $x^{n-k}$ in $p_n(x)q_k(x)$ is 0.}
\end{equation}

We first assume that $n-k$ is odd. By the conditions of Lemma~\ref{lem:5} we 
see that $p_n(x)$ contains $1-x$ as a factor if and only if $n$ is odd. 
Similarly, by Lemma~\ref{lem:4}, $q_k(x)$ is divisible by $1+x$ if and only if
$k$ is even. Then either $p_n(x)q_k(x)$ has no factor 
$1\pm x$ or it has the factor $(1-x)(1+x)=1-x^2$.
In either case the coefficients of odd powers of $x$ in $p_n(x)q_k(x)$ vanish,
which proves \eqref{3.8a} for the case $n-k$ odd.

Now we let $n-k$ be even and assume that 
\begin{equation}\label{3.9}
n-k \equiv 2^{\beta-1}\pmod{2^{\beta}},\qquad \beta\geq 2.
\end{equation}
This means that $n\geq 2^{\beta-1}+1$, and if $\alpha$ is as in the hypothesis 
of Lemma~\ref{lem:5}, then
\begin{equation}\label{3.10}
2^{\alpha}-n \equiv 2^{\beta}-\overline{n} \pmod{2^{\beta}},
\end{equation}
where $\overline{n}$ is the smallest nonnegative residue of $n$ modulo 
$2^{\beta}$. We also see that the binary digits $b_0,b_1,\ldots,b_{\beta-1}$ 
of $b=2^{\alpha}-n$ depend only on $\overline{n}$. Next, by \eqref{3.9} we have
\begin{equation}\label{3.11}
k-1\equiv \overline{k-1} \equiv \overline{n}-2^{\beta-1}-1 \pmod{2^{\beta}},
\end{equation}
where again $\overline{k-1}$ is the smallest nonnegative residue of $k-1$
modulo $2^\beta$, and we see that the binary digits $u_0,u_1,\ldots,u_{\beta-1}$ 
in Lemma~\ref{lem:4} depend only on $\overline{k-1}$.

Combining \eqref{3.10} and \eqref{3.11}, we get 
\[
(2^{\alpha}-n)+(k-1)\equiv (2^{\beta}-\overline{n})+(\overline{n}-2^{\beta-1}-1)
= 2^{\beta-1}-1 \pmod{2^{\beta}},
\]
which means that the last $\beta$ binary digits of $(2^{\alpha}-n)+(k-1)$ are
always $01\ldots 1$. This, in turn, implies that the polynomial $p_n(x)q_k(x)$,
given that \eqref{3.9} holds, contains the factors
\[
\big(1\pm x^{2^{\beta-2}}\big)\big(1\pm x^{2^{\beta-3}}\big)\cdots
\big(1\pm x^{2^0}\big),
\]
and either neither of the factors $1-x^{2^{\beta-1}}$ and $1+x^{2^{\beta-1}}$, or
both, which then gives the product $1-x^{2^{\beta}}$; see Table~3 for an
illustration. In particular this means that, by \eqref{3.9}, the coefficient 
of $x^{n-k}$ vanishes, which completes the proof of \eqref{3.8a} and of the
theorem.
\end{proof}

\medskip
\begin{center}
{\renewcommand{\arraystretch}{1.1}
\begin{tabular}{|c|c||c|c||r|r|}
\hline
$\overline{n}$ & $\overline{k}$ & $b_2b_1b_0$ & $u_2u_1u_0$ & $p_n(x)\pmod{x^8}$ & $q_k(x)\pmod{x^8}$\\
\hline
0 & 4 & 000 & 011 & $0$ & $(1+x^2)(1+x)$ \\
1 & 5 & 111 & 100 & $(1-x^4)(1-x^2)(1-x)$ & $(1+x^4)$ \\
2 & 6 & 110 & 101 & $(1-x^4)(1-x^2)$ & $(1+x^4)(1+x)$ \\
3 & 7 & 101 & 110 & $(1-x^4)(1-x)$ & $(1+x^4)(1+x^2)$ \\
4 & 0 & 100 & 111 & $(1-x^4)$ & $(1+x^4)(1+x^2)(1+x)$ \\
5 & 1 & 011 & 000 & $(1-x^2)(1-x)$ & $0$ \\
6 & 2 & 010 & 001 & $(1-x^2)$ & $(1+x)$ \\
7 & 3 & 001 & 010 & $(1-x)$ & $(1+x^2)$ \\
\hline
\end{tabular}}

\medskip
{\bf Table~3}:\label{tab:3} Illustration of the proof of Theorem~\ref{thm:1} 
for $\beta=3$.
\end{center}

\section{Row, Column, and Antidiagonal Sums}\label{sec:4}

In this section we explore further properties of the infinite matrices $P$ and
$R$ and their inverses. In particular, we consider the sums mentioned in the
title, provided they are defined. It is clear that for all four
infinite matrices in question, the diagonal sums diverge, but for $R$ and
$R^{-1}$ we discuss the proportion of nonzero entries in the diagonals in
Section~\ref{sec:7}.

Given a matrix $M$, we denote the sums of its $n$th row, $k$th column, and 
$n$th antidiagonal by $r_n(M)$, $c_k(M)$, and $a_n(M)$, respectively.

\begin{corollary}\label{cor:7}
Let $P$ be the infinite matrix defined in Section~\ref{sec:2}. Then
\begin{equation}\label{4.1}
r_n(P)=F_n,\qquad c_k(P)=2^{k-1},\qquad a_n(P)={\mathcal P}_{n+2},
\end{equation} 
where $F_n$ is the $n$th Fibonacci number, and ${\mathcal P}_n$ is the $n$th
Padovan number. Furthermore, we have
\begin{equation}\label{4.2}
r_n(P^{-1})=(-1)^n{\mathcal F}_{n-2},\qquad
a_n(P^{-1})=\sum_{j=0}^{\lfloor\frac{n-1}{2}\rfloor}
\frac{(-1)^{n-1}j}{2n-3j-2}\binom{2n-3j-2}{n-2j-1},
\end{equation}
where ${\mathcal F}_n$ is the $n$th Fine number.
\end{corollary}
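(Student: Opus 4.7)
My plan is to split the five claims by difficulty: the three identities for $P$ follow in short order from the explicit formula $P_{n,k}=\binom{k-1}{n-k}$ derived in Section~\ref{sec:2}, the antidiagonal formula for $P^{-1}$ is a direct readoff from Theorem~\ref{thm:2}, and the Fine-number row sum of $P^{-1}$ is the only piece that really requires new machinery.

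For $c_k(P)=2^{k-1}$ I would observe that column $k$ is nonzero only for $k\le n\le 2k-1$, so $c_k(P)=\sum_{j=0}^{k-1}\binom{k-1}{j}=2^{k-1}$. For $r_n(P)=F_n$ I would substitute $m=n-k$ in $r_n(P)=\sum_{k=1}^n\binom{k-1}{n-k}$ to recover the classical shallow-diagonal identity $F_n=\sum_{m=0}^{\lfloor(n-1)/2\rfloor}\binom{n-1-m}{m}$. The antidiagonal formula for $P^{-1}$ is obtained by parametrizing the $n$th antidiagonal as $(m,k)=(n-k+1,k)$ and feeding this into Theorem~\ref{thm:2}; after the reindexing $j=k-1$ the sign simplifies from $(-1)^{m-k}=(-1)^{n-2k+1}$ to $(-1)^{n-1}$, and the claimed formula emerges with the $j=0$ summand vanishing because of the factor $j$ in the numerator, consistent with $(P^{-1})_{m,1}=0$ for $m\ge 2$.

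For $a_n(P)=\mathcal P_{n+2}$, I would write
\[
a_n(P)=\sum_{j\ge 1}P_{n-j+1,j}=\sum_{j\ge 1}\binom{j-1}{n-2j+1}
\]
and, for $j\ge 2$, apply Pascal's rule $\binom{j-1}{n-2j+1}=\binom{j-2}{n-2j}+\binom{j-2}{n-2j+1}$. Reindexing $i=j-1$ inside each of the two resulting sums identifies them with $a_{n-3}(P)$ and $a_{n-2}(P)$, while the $j=1$ boundary contribution $\binom{0}{n-1}$ vanishes for $n\ge 2$. This yields the Padovan recurrence $a_n(P)=a_{n-2}(P)+a_{n-3}(P)$ for $n\ge 2$, and checking $a_1(P)=1=\mathcal P_3$, $a_2(P)=0=\mathcal P_4$, $a_3(P)=1=\mathcal P_5$ closes the induction.

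The main obstacle is the Fine-number row sum $r_n(P^{-1})=(-1)^n\mathcal F_{n-2}$, for which I would use generating functions. Setting $v_n:=r_n(P^{-1})$, the identity $P(P^{-1}\mathbf 1)=\mathbf 1$ is equivalent to $\sum_{k=1}^n\binom{k-1}{n-k}v_k=1$ for every $n\ge 1$. Multiplying by $x^n$, summing, and applying $\sum_n\binom{k-1}{n-k}x^n=x^k(1+x)^{k-1}$ produces the functional equation
\[
\frac{1}{1+x}\,V\bigl(x(1+x)\bigr)=\frac{x}{1-x},\qquad V(x):=\sum_{n\ge 1}v_n x^n.
\]
Inverting $y=x(1+x)$ via the quadratic formula gives $1-x=\tfrac12(3-\sqrt{1+4y})$, whence $V(y)=2y/(3-\sqrt{1+4y})$. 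The remaining hurdle is to recognize this as the generating function of $(-1)^n\mathcal F_{n-2}$; I would invoke the standard Fine generating function $W(y):=\sum_{n\ge 0}\mathcal F_n y^n=\frac{1-\sqrt{1-4y}}{y(3-\sqrt{1-4y})}$, substitute $y\mapsto -y$, and multiply by $y^2$ to absorb the shift $n\mapsto n-2$. A brief rationalization then shows $y+y^2W(-y)=2y/(3-\sqrt{1+4y})$, completing the identification. The boundary case $n=1$, where $r_1(P^{-1})=1$, is handled separately or absorbed by the convention $\mathcal F_{-1}=-1$.
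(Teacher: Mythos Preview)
Your argument is correct. For $r_n(P)$, $c_k(P)$, $a_n(P)$, and $a_n(P^{-1})$ your approach is essentially the paper's: the same formula $P_{n,k}=\binom{k-1}{n-k}$, the same Pascal--rule derivation of the Padovan recurrence (the paper writes it as $a_{n+3}=a_{n+1}+a_n$, you as $a_n=a_{n-2}+a_{n-3}$; note that strictly speaking your recurrence needs $n\ge 4$ rather than $n\ge 2$, which is why you check three initial values), and the same direct readoff from Theorem~\ref{thm:2} for the antidiagonals of $P^{-1}$.

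The genuine difference is in $r_n(P^{-1})$. The paper never forms a generating function: it substitutes the explicit entry \eqref{2.7} into the row sum, rewrites $\frac{k-1}{2n-k-1}\binom{2n-k-1}{n-1}$ as $\frac{k-1}{n-1}\binom{2n-k-2}{n-2}$, shifts the index, and matches the resulting finite sum against the known closed form \eqref{4.3} for~$\mathcal F_n$. Your route instead exploits the \emph{relation} $P\cdot P^{-1}\mathbf 1=\mathbf 1$ as a recursion for the unknowns $v_n=r_n(P^{-1})$, passes to the functional equation $\frac{1}{1+x}V(x(1+x))=\frac{x}{1-x}$, inverts the quadratic substitution to get $V(y)=2y/(3-\sqrt{1+4y})$, and then identifies this with $y+y^2W(-y)$ via the standard Fine generating function. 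The paper's computation is shorter and stays entirely within finite sums, but it depends on already having the explicit inverse \eqref{2.7} and the particular representation \eqref{4.3}. Your approach is more self-contained in that it would still work if one only knew $P$ and the \emph{generating function} of the Fine numbers; it also explains structurally why the Fine numbers appear, as the compositional inverse of $x(1+x)$ is exactly what links the shifted Pascal columns to the Catalan/Fine family.
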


The column sums $c_n(P^{-1})$ diverge; see also \eqref{2.8}.
The Fibonacci numbers, satisfying the recurrence relation $F_{n+1}=F_n+F_{n-1}$,
are taken with the usual initialization $F_0=0$, $F_1=1$. The Padovan 
numbers are defined by a third-order recurrence relation, namely 
${\mathcal P}_0=1$, ${\mathcal P}_1={\mathcal P}_2=0$, and
${\mathcal P}_{n+1}={\mathcal P}_{n-1}+{\mathcal P}_{n-2}$ for $n\geq 3$.
Numerous properties and references can be found in \cite[A000931]{OEIS}.

The Fine numbers have various combinatorial definitions; see \cite{DS} or
\cite[A000957]{OEIS} for properties and references. One property we will
use is the sum
\begin{equation}\label{4.3}
{\mathcal F}_n = \frac{1}{n+1}\sum_{k=0}^n(-1)^k(k+1)\binom{2n-k}{n};
\end{equation}
see \cite[p.~251]{DS}. Finally, the unnamed sequence on the right of \eqref{4.2}
is listed as A132364 in \cite{OEIS}, where some properties can also be found.

\begin{proof}[Proof of Corollary~\ref{cor:7}]
We know from \eqref{2.5} that $P_{n,k}=\binom{k-1}{n-k}$. This immediately gives
\[
r_n(P) = \sum_{k=1}^n\binom{k-1}{n-k} = \sum_{k=0}^{n-1}\binom{n-k-1}{k}=F_n,
\]
where the right-hand equation is a well-known identity that is easy to verify
with the Fibonacci recursion. Next, using the binomial theorem, we have
\[
c_k(P)=\sum_{n=k}^{2k-1}\binom{k-1}{n-k}=\sum_{n=0}^{k-1}\binom{k-1}{n}=2^{k-1}.
\]
Further,
\[
a_n(P)=\sum_{j=0}^{\lfloor\frac{n-1}{2}\rfloor}P_{n-j,j+1}
=\sum_{j=0}^{\lfloor\frac{n-1}{2}\rfloor}\binom{j}{n-2j-1},
\]
and using the sum on the right, we get
\begin{align*}
a_{n+1}(P)+a_n(P) &= \sum_{j=0}^{\lfloor\frac{n}{2}\rfloor}
\left(\binom{j}{n-2j}+\binom{j}{n-2j-1}\right) 
=\sum_{j=0}^{\lfloor\frac{n}{2}\rfloor}\binom{j+1}{n-2j} \\
&=\sum_{j=1}^{\lfloor\frac{n+2}{2}\rfloor}\binom{j}{n+2-2j}
= a_{n+3}(P),
\end{align*}
where in the last line we have used the fact that the summand for $j=0$ is zero.
Thus the sequence $a_n(P)$ satisfies the recurrence relation for the Padovan
numbers, and it is easy to verify by direct computation that 
$a_n(P)={\mathcal P}_{n+2}$ for $n=1,2,3$. This proves the identities in
\eqref{4.1}.

To prove the identities in \eqref{4.2}, we use \eqref{2.7}. First, we get
\begin{align*}
r_n(P^{-1}) &= \sum_{k=2}^n(-1)^{n-k}\frac{k-1}{2n-k-1}\binom{2n-k-1}{n-1} \\
&= \sum_{k=2}^n(-1)^{n-k}\frac{k-1}{n-1}\binom{2n-k-2}{n-2}, 
\end{align*}
where we have used a well-known combinatorial identity. Shifting the summation,
we then get
\[
r_n(P^{-1}) 
=\frac{(-1)^n}{n-1}\sum_{k=0}^{n-2}(-1)^k(k-1)\binom{2(n-2)-k}{n-2}
= (-1)^n{\mathcal F}_{n-2},
\]
where the second equality follows from comparison with \eqref{4.3}. Finally,
the right-hand identity in \eqref{4.2} follows immediately from \eqref{2.7} and
the fact that
\[
a_n(P^{-1})=\sum_{j=0}^{\lfloor\frac{n-1}{2}\rfloor}\big(P^{-1}\big)_{n-j,j+1}.
\]
This completes the proof of the corollary.
\end{proof}

\begin{corollary}\label{cor:8}
Let $R$ be the infinite matrix defined in Section~\ref{sec:1}. Then
\begin{equation}\label{4.4}
r_n(R)=s(n)\qquad\hbox{and}\qquad c_k(R)=2^{d(k-1)},
\end{equation}
where $d(k-1)$ is the sum of binary digits of $k-1$. Furthermore, $a_n(R)$ is
the sequence defined recursively by $a_1(R)=1$, $a_2(R)=0$, and
\begin{equation}\label{4.5}
a_{2n+2}(R)=a_n(R),\qquad a_{2n+1}(R)=a_n(R)+a_{n+1}(R)\qquad(n\geq 1).
\end{equation}
\end{corollary}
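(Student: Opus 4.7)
The plan is to treat the three claims separately. The first two are short deductions from formulas already established in the paper, while the antidiagonal identity is the substantive part and reduces to a parity analysis via Lucas's theorem.

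For the row sums, row $n$ of $R$ holds, by construction and \eqref{1.7}, the coefficients of $r(n;x)=x^{n}s(n;1/x)$; hence $r_n(R)=r(n;1)=s(n;1)=s(n)$ by \eqref{1.4}. For the column sums, the $k$th column of $R$ is, by definition \eqref{3.0b}, the coefficient sequence of $q_k(x)$, so $c_k(R)=q_k(1)$. Evaluating the factorization of Lemma~\ref{lem:4} at $x=1$ gives
\[
q_k(1)=\prod_{i\ge 0}(1+1)^{u_i}=2^{\sum_{i}u_i}=2^{d(k-1)},
\]
as claimed.

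The antidiagonal identity is the main point. Using \eqref{2.4}, the entry in row $m$ and column $\ell$ of $R$ equals $\binom{\ell-1}{m-\ell}^{*}$, and with $(m,\ell)=(n-j,j+1)$ this becomes $\binom{j}{n-2j-1}^{*}$. Therefore
\[
a_n(R)=\sum_{j=0}^{\lfloor(n-1)/2\rfloor}\binom{j}{n-2j-1}^{*},
\]
and the initial values $a_1(R)=\binom{0}{0}^{*}=1$ and $a_2(R)=\binom{0}{1}^{*}=0$ are immediate. To establish the recurrences in \eqref{4.5}, the strategy is to split each sum according to the parity of $j$ and to simplify the resulting binomials modulo $2$ by applying Lucas's congruence \eqref{3.3} to the least significant binary digit. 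For $a_{2n+2}(R)=\sum_j\binom{j}{2n+1-2j}^{*}$, the even-$j$ terms $\binom{2i}{2(n-2i)+1}^{*}$ vanish (a bit $0$ over a bit $1$), while the odd-$j$ terms reduce to $\binom{i}{n-2i-1}^{*}$ and sum to $a_n(R)$. For $a_{2n+1}(R)=\sum_j\binom{j}{2n-2j}^{*}$, the two parity classes reduce to $\binom{i}{n-2i}^{*}$ and $\binom{i}{n-2i-1}^{*}$, whose sums are $a_{n+1}(R)$ and $a_n(R)$ respectively.

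The only place where care is needed is tracking the shifted indices when converting the $j=2i$ and $j=2i+1$ pieces back into recognizable sums $a_n(R)$ and $a_{n+1}(R)$; once the correct shifts are identified, the required congruences are immediate from the single-bit Lucas reduction and the recurrence falls out directly.
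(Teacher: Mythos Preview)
Your proof is correct and follows essentially the same route as the paper's: the row and column sums are read off from \eqref{1.4} and from $q_k(1)$ via Lemma~\ref{lem:4}, and the antidiagonal recurrence is proved from the closed form $a_n(R)=\sum_j\binom{j}{n-2j-1}^{*}$ by splitting on the parity of $j$ and applying the single-bit Lucas reduction (the paper packages this as \eqref{4.7}). The index bookkeeping you carry out matches the paper's computation line for line.
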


Before proving these results, we note that $c_k(R)$ is also the number of 
odd entries in row $k-1$ of Pascal's triangle, as can be seen in the
proof below. The equivalence of the two forms of $c_k(R)$ is in fact a 
well-known result due to Glaisher \cite{Gl}, based on Lucas's congruence
\eqref{3.3}. The sequence $c_{k+1}(R)$ is known as Gould's sequence; 
numerous properties, remarks, and references can be found in 
\cite[A001316]{OEIS}.

The sequence in \eqref{4.5} is listed in \cite{OEIS} as A106345, where the
explicit formula
\begin{equation}\label{4.6}
a_n(R)=\sum_{j=0}^{\lfloor\frac{n-1}{2}\rfloor}\binom{j}{n-1-2j}^*
\end{equation}
can also be found. A proof is given below.

\begin{proof}[Proof of Corollary~\ref{cor:8}]
The first identity in \eqref{4.4} follows from the definition of the matrix $R$
and from \eqref{1.4}. Next, by definition of the polynomial $q_k(x)$ in 
Section~\ref{sec:3} and by \eqref{3.1}, we have
\[
c_k(R) = q_k(1) = \sum_{j=0}^{k-1}\binom{k-1}{j}^*,
\]
which counts the number of odd entries in row $k-1$ of Pascal's 
triangle, as claimed. On the other hand, by Lemma~\ref{lem:4} we have
\[
c_k(R)=q_k(1)=\prod_{i\geq 0}2^{u_i}=2^{\sum_{i\geq 0}u_i}=2^{d(k-1)},
\]
which proves the second identity in \eqref{4.4}. Next, by \eqref{2.4} and
\eqref{2.5} we have $R_{n,k}=\binom{n-1}{n-k}^*$, and then \eqref{4.6} follows
from the fact that
\[
a_n(R)=\sum_{j=0}^{\lfloor\frac{n-1}{2}\rfloor}R_{n-j,j+1}.
\]
Finally, to show that the sequence $a_n(R)$ satisfies \eqref{4.5}, we note that
from \eqref{3.3} we get for nonnegative integers $r, s$ and $a, b\in \{0, 1\}$
(as also used by Carlitz \cite[p.~18f.]{Ca1}),
\begin{equation}\label{4.7}
\binom{2r+a}{2s+b}^*=\begin{cases}
0 & \hbox{when $a=0$ and $b=1$},\\
\binom{r}{s}^* & \hbox{otherwise}.
\end{cases}
\end{equation}
Using this and \eqref{4.6}, we first get 
\begin{align*}
a_{2n+2}(R)&=\sum_{j\geq 0}\binom{j}{2n+1-2j}^*
=\sum_{i\geq 0}\binom{2i+1}{2n-1-4i}^* \\
&=\sum_{i\geq 0}\binom{i}{n-1-2i}^* = a_n(R),
\end{align*}
where we have used \eqref{4.7} twice. Further, splitting $j$ into even and odd,
we get
\begin{align*}
a_{2n+1}(R)&=\sum_{j\geq 0}\binom{j}{2n-2j}^*
=\sum_{i\geq 0}\binom{2i}{2n-4i}^*+\sum_{i\geq 0}\binom{2i+1}{2n-2-4i}^* \\
&=\sum_{i\geq 0}\binom{i}{n-2i}^*+\sum_{i\geq 0}\binom{i}{n-1-2i}^*
= a_{n+1}(R)+a_n(R).
\end{align*}
This proves the identities in \eqref{4.5}; the initial conditions are easy to
verify by direct computation.
\end{proof}

\begin{corollary}\label{cor:9}
If $R$ is the infinite matrix defined in Section~\ref{sec:1}, then
\begin{equation}\label{4.8}
r_n(R^{-1})=\begin{cases}
1 &\hbox{when $n$ is a power of $2$},\\
0 &\hbox{otherwise},
\end{cases}
\end{equation}
and $a_n(R^{-1})$ is the sequence defined recursively by $a_1(R^{-1})=1$, 
$a_2(R^{-1})=0$, and
\begin{equation}\label{4.9}
a_{2n}(R^{-1})=-a_{n+1}(R^{-1}),\quad 
a_{2n+1}(R^{-1})=a_{n}(R^{-1})+a_{n+1}(R^{-1})\quad(n\geq 1).
\end{equation}
\end{corollary}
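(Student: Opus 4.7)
The plan is to handle the two assertions separately.

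For \eqref{4.8}, notice that the definition \eqref{3.0a} gives $r_n(R^{-1}) = p_n(1)$. The factorization in Lemma~\ref{lem:5} writes
\[
p_n(x) = \prod_{i\geq 0}\bigl(1-x^{2^i}\bigr)^{b_i},
\]
where the $b_i$ are the binary digits of $2^\alpha - n$. Setting $x = 1$, the product vanishes as soon as some $b_i \neq 0$, and the only surviving case is $2^\alpha - n = 0$, i.e.\ $n = 2^\alpha$, for which the empty product evaluates to $1$. The degenerate case $n = 1$ is handled separately by $p_1(x) = 1$.

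For the recurrence \eqref{4.9} I would model the argument on the proof of \eqref{4.5}: write an explicit formula for $a_n(R^{-1})$ using Theorem~\ref{thm:1}, split the sum according to the parity of its index, and use \eqref{4.7} to reduce the mod-$2$ binomial coefficients. Theorem~\ref{thm:1} gives
\[
a_n(R^{-1}) = \sum_{j=0}^{\lfloor(n-1)/2\rfloor}(-1)^{t_{n-2j-1}}\binom{2n-3j-2}{n-2j-1}^{*}.
\]
Substituting $n \mapsto 2n+1$ and splitting by the parity of $j$, the Thue--Morse relation $t_{2m} = t_m$ simplifies the signs, and \eqref{4.7} (applied with $(a,b) = (0,0)$ and $(1,0)$ in the two subsums) simplifies the binomial coefficients so that the subsums collapse exactly to $a_{n+1}(R^{-1})$ and $a_n(R^{-1})$, yielding the odd-index recurrence.

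Substituting $n \mapsto 2n$ and splitting similarly, the sign identity $t_{2m+1} = 1 - t_m$ produces an overall minus sign. The even-$j$ subsum is killed by \eqref{4.7} in the case $(a,b) = (0,1)$. The odd-$j$ subsum, after shifting its index by one, reproduces $a_{n+1}(R^{-1})$ up to a boundary term of the form $(-1)^{t_n}\binom{2n}{n}^{*}$; this term is zero because $\binom{2n}{n}$ is even for every $n \geq 1$. Hence $a_{2n}(R^{-1}) = -a_{n+1}(R^{-1})$, and the initial values $a_1(R^{-1}) = 1$ and $a_2(R^{-1}) = 0$ are read off from \eqref{1.8}. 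The main obstacle is the bookkeeping: tracking the Thue--Morse signs through the two layers of halving and checking that the parity-split summation ranges line up exactly on both sides of each recurrence. The observation that makes everything come out cleanly is that the boundary term introduced by the shift in the even-index case vanishes thanks to the well-known evenness of the central binomial coefficient.
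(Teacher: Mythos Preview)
Your proposal is correct and follows essentially the same route as the paper: the row-sum identity \eqref{4.8} is obtained from $r_n(R^{-1})=p_n(1)$ together with Lemma~\ref{lem:5}, and the recurrences \eqref{4.9} are derived from the explicit formula \eqref{4.10} by splitting on the parity of $j$, applying \eqref{4.7}, and using the Thue--Morse relations $t_{2m}=t_m$, $t_{2m+1}=1-t_m$. The paper likewise disposes of the extra $j=0$ term in the even-index case by observing that $\binom{2n}{n}$ is even, exactly as you do.
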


To supplement Corollary~\ref{cor:9}, we consider $n=2^{\alpha-1}+1$ for
$\alpha\geq 2$. Then $2^{\alpha}-n=2^{\alpha-1}-1$, and by Lemma~\ref{lem:5}
the polynomial $p_n(x)$ has degree $2^{\alpha-1}-1=n-2$, and all $n-1$
coefficients are nonzero; see also Table~2. Hence the column sums of $R^{-1}$
diverge for $k\geq 2$, while $c_1(R^{-1})=1$; see \eqref{1.8} as an
illustration.

The sequence of antidiagonal sums is given by
\begin{equation}\label{4.10}
a_n(R^{-1}) = \sum_{j=0}^{\lfloor\frac{n-1}{2}\rfloor}
\binom{2n-3j-2}{n-2j-1}^*(-1)^{t_{n-2j-1}},
\end{equation}
where $(t_n)$ is the Prouhet-Thue-Morse sequence defined in \eqref{1.9}. 
The sequence \eqref{4.9} is listed in \cite{OEIS} as A342682; starting with 
$n=1$, the first 30 terms are
\[
1,0, 1, -1, 1, 1, 0, -1, 0, -1, 2, 0, 1, 1, -1, 0, -1, 1, -1, -2,
1, 0, 2, -1, 1, -1, 2, 1, 0, 0.
\]

\begin{proof}[Proof of Corollary~\ref{cor:9}]
By definition of the polynomials $p_n(x)$ at the beginning of
Section~\ref{sec:3}, and by Lemma~\ref{lem:5}, we have $r_n(R^{-1})=p_n(1)=0$ 
unless $2^{\alpha}-n=0$, that is, $n$ is a power of 2. This proves \eqref{4.8}.

Next, using Theorem~\ref{thm:1} and the fact that
\[
a_n(R^{-1})=\sum_{j=0}^{\lfloor\frac{n-1}{2}\rfloor}\big(R^{-1}\big)_{n-j,j+1},
\]
we immediately get \eqref{4.10}. Finally, to obtain the recurrence relations in
\eqref{4.9} we proceed as we did in the previous proof. With \eqref{4.10} we
get
\[
a_{2n}(R^{-1})=\sum_{j\geq 0}\binom{4n-3j-2}{n-2j-1}^*(-1)^{t_{2n-2j-1}},
\]
and with \eqref{4.7} we see that the terms with even $j$ vanish. We therefore
set $j=2i-1$ $(i\geq 1)$ and note that by \eqref{1.9} we have
$t_{2n-4i+1}=1-t_{n-2i}$, so that with \eqref{4.7},
\[
a_{2n}(R^{-1})
=\sum_{i\geq 1}\binom{4n-6i+1}{n-4i+1}^*(-1)^{t_{2n-4i+1}}
=\sum_{i\geq 1}\binom{2n-3i}{n-2i}^*(-1)^{1-t_{n-2i}}.
\]
This last identity can actually be taken over $i\geq 0$ since the term for
$i=0$ is
\[
\binom{2n}{n}=\frac{2n}{n}\binom{2n-1}{n-1}\equiv 0\pmod{2}
\]
and thus, again with \eqref{4.10}, we have $a_{2n}(R^{-1})=-a_{n+1}(R^{-1})$,
as desired. \\
Next we have
\[
a_{2n+1}(R^{-1})=\sum_{j\geq 0}\binom{4n-3j}{2n-2j}^*(-1)^{t_{2n-2j}}.
\]
This time we apply the first identity in \eqref{1.9}, giving 
$t_{2n-2j}=t_{n-j}$. Then, splitting $j$ into even and odd integers and using
\eqref{4.7}, we get
\begin{align*}
a_{2n+1}(R^{-1})&=\sum_{i\geq 0}\binom{4n-6i}{n-4i}^*(-1)^{t_{2n-4i}}
+\sum_{i\geq 0}\binom{4n-6i-3}{2n-4i-2}^*(-1)^{t_{2n-4i-2}} \\
&=\sum_{i\geq 0}\binom{2n-3i}{n-2i}^*(-1)^{t_{n-2i}}
+\sum_{i\geq 0}\binom{2n-3i-2}{n-2i-1}^*(-1)^{t_{n-2i-1}} \\
&=a_{n+1}(R^{-1})+a_{n}(R^{-1}),
\end{align*}
having again used \eqref{4.10}. This proves the second identity in \eqref{4.9};
the initial conditions are again easy to verify.
\end{proof}

\section{A Connection with Compositions of Integers}\label{sec:5}

Another property of the matrix $R$ and its inverse, again related to a 
sequence of row sums, reveals an intriguing connection with 
compositions of integers. Given an integer $m\geq 1$, a {\it composition} of 
$m$ is an ordered set (or a finite sequence) of positive integers whose
elements sum to $m$. In contrast to partitions, sequences that differ in the
order of their terms define different compositions. Let ${\mathcal C}(m)$ be 
the set of compositions of $m$. Thus, for example, the compositions of $m=4$ are
\[
{\mathcal C}(4)=\{(4),\; (3,1),\; (2,2),\; (2,1,1),\; (1,3),\; (1,2,1),\; (1,1,2),\; (1,1,1,1)\}.
\]
In general, the cardinality of ${\mathcal C}(m)$ is $2^{m-1}$; see, for 
instance, \cite[p.~15]{Sta}.

Suppose that $\mu=(\mu_1,\ldots,\mu_\ell)\in {\mathcal C}(m)$. We define
$\gamma, \delta: {\mathcal C}(m)\rightarrow {\mathcal C}(m+1)$ by
\[
\gamma(\mu)=(\mu_1,\ldots,\mu_\ell+1),\qquad \delta(\mu)=(\mu_1,\ldots,\mu_\ell,1).
\]
Then the images of ${\mathcal C}(m)$ under $\gamma$ and $\delta$ are disjoint 
subsets of ${\mathcal C}(m+1)$, and since they
both have cardinality $2^{m-1}$, their union is ${\mathcal C}(m+1)$.

We now define a sequence $C(n)$, $n=1, 2,\ldots$, of compositions.
Let $C(1)=()=\emptyset$, the empty composition, and
$C(2)=(1)$, the unique composition of $m=1$. Then for $n\geq 2$, let
\begin{equation}\label{7.2}
C(2n-1) = \gamma(C(n)),\qquad
C(2n) = \delta(C(n)).
\end{equation}
We can see by induction that for any $m\geq 1$, the terms 
$C(2^{m-1}+1),\ldots, C(2^m)$ of this sequence are exactly
the $2^{m-1}$ compositions ${\mathcal C}(m)$; see Table~4.

Next, if $\mu=(\mu_1,\ldots,\mu_\ell)\in {\mathcal C}(m)$ as before, we define 
the products of the parts of $\mu$ and a corresponding sequence by
\begin{equation}\label{7.3}
f(\mu):=\mu_1\cdots \mu_\ell,\qquad f(n) := f(C(n))\qquad (n\geq 2),
\end{equation}
with $f(1)=1$, the usual convention for an empty product. This sequence is 
A124758 in \cite{OEIS}. See Table~4 for the first 16 values of $f(n)$.

\medskip
\begin{center}
\begin{tabular}{|c|l|c||c|l|c|}
\hline
$n$ & $C(n)$ & $f(n)$ & $n$ & $C(n)$ & $f(n)$ \\
\hline
1 & () & 1 & 9 & (4) & 4 \\
2 & (1) & 1 & 10 & (3, 1) & 3 \\
3 & (2) & 2 & 11 & (2, 2) & 4 \\
4 & (1, 1) & 1 & 12 & (2, 1, 1) & 2\\
5 & (3) & 3 & 13 & (1, 3) & 3 \\
6 & (2, 1) & 2 & 14 & (1, 2, 1) & 2 \\
7 & (1, 2) & 2 & 15 & (1, 1, 2) & 2 \\
6 & (2, 1) & 2 & 14 & (1, 2, 1) & 2 \\
7 & (1, 2) & 2 & 15 & (1, 1, 2) & 2 \\
8 & (1, 1, 1) & 1 & 16 & (1, 1, 1, 1) & 1 \\
\hline
\end{tabular}

\smallskip
{\bf Table~4}:\label{tab:4} $C(n)$ and $f(n)$, $1\leq n\leq 16$.
\end{center}
\medskip

Returning to the matrix $R$ of Section~\ref{sec:1}, we now consider the
Hadamard (or element-wise) product $R\odot|R^{-1}|$
of $R$ and the matrix consisting of the absolute values of $R^{-1}$; see the
upper-left $10\times 10$ submatrix in \eqref{7.4}.

\begin{equation}\label{7.4}
\big(R\odot|R^{-1}|\big)_{10}=
\begin{pmatrix}
1 & & & & & & & & &   \\
\cdot & 1 & & & & & & & &  \\
\cdot & 1 & 1 & & & & & & & \\
\cdot & \cdot & \cdot & 1 & & & & & &  \\
\cdot & \cdot & 1 & 1 & 1 & & & & & \\
\cdot & \cdot & \cdot & 1 & \cdot & 1 & & & & \\
\cdot & \cdot & \cdot & \cdot & \cdot & 1 & 1 & & & \\
\cdot & \cdot & \cdot & \cdot & \cdot & \cdot & \cdot & 1 & & \\
\cdot & \cdot & \cdot & \cdot & 1 & \cdot & 1 & 1 & 1 &  \\
\cdot & \cdot & \cdot & \cdot & \cdot & 1 & \cdot & 1 & \cdot & 1  \\
\end{pmatrix}.
\end{equation}

The row sums of this matrix are 1, 1, 2, 1, 3, 2, 2, 1, 4, 3. Comparing them 
with Table~4, we see that these values are identical with 
$f(1),\ldots, f(10)$. The following result shows that this is not a coincidence.

\begin{theorem}\label{thm:10}
For all $n\geq 1$ we have
\begin{equation}\label{7.5}
r_n\big((R\odot|R^{-1}|\big) = f(n),
\end{equation}
where $r_n$ is the $n$th row sum and $f(n)$ is the product of parts, as
defined in \eqref{7.3}. Furthermore, for all $n\geq 1$ we have the explicit
expression
\begin{equation}\label{7.6}
r_n\big(R\odot|R^{-1}|\big) 
= \sum_{k=1}^n\binom{k-1}{n-k}^*\binom{2n-k-1}{n-k}^*.
\end{equation}
\end{theorem}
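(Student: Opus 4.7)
The plan is to first derive (7.6) from the explicit forms of $R$ and $R^{-1}$, and then deduce (7.5) from (7.6) by matching mutual recurrences. From (2.4)--(2.5) the $(n,k)$-entry of $R$ equals $\binom{k-1}{n-k}^*$, and by Theorem~\ref{thm:1} the absolute value of the $(n,k)$-entry of $R^{-1}$ equals $\binom{2n-k-1}{n-k}^*$ for $n \geq 2$ (with the case $n=k=1$ trivial). Multiplying these entries and summing over $1 \leq k \leq n$ yields (7.6) immediately.

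To establish (7.5), introduce the companion sequence $g(n)$ equal to the product of all parts of $C(n)$ except the last, with the convention $g(1) := 0$. Since $C(2n) = \delta(C(n))$ appends a $1$ and $C(2n-1) = \gamma(C(n))$ increments the last part, a short computation gives
\begin{align*}
f(2n) &= f(n), & f(2n-1) &= f(n) + g(n), \\
g(2n) &= f(n), & g(2n-1) &= g(n)
\end{align*}
for $n \geq 1$. On the matrix side, set $r_n := r_n(R \odot |R^{-1}|)$ and
\[
\tilde{g}(n) := \sum_{j \geq 0} \binom{j}{n-j-2}^* \binom{2n-j-3}{n-j-2}^*.
\]
Splitting the sums defining $r_{2n}$, $r_{2n-1}$, $\tilde{g}(2n)$, and $\tilde{g}(2n-1)$ according to the parity of the summation index and applying the starred-binomial identity (4.7), in each case one parity collapses to zero while the surviving terms reindex to the matching system
\[
r_{2n} = r_n, \qquad r_{2n-1} = r_n + \tilde{g}(n), \qquad \tilde{g}(2n) = r_n, \qquad \tilde{g}(2n-1) = \tilde{g}(n).
\]
With base values $r_1 = f(1) = 1$ and $\tilde{g}(1) = g(1) = 0$, strong induction on $n$ establishes $r_n = f(n)$ and $\tilde{g}(n) = g(n)$ simultaneously, proving (7.5).

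The main obstacle is the case-by-case application of (4.7) in the four reductions above: one must rewrite each of the upper and lower binomial arguments in the form $2s + b$ with $b \in \{0,1\}$, determine which parity of the summation index forces a summand to vanish, and verify that the surviving summation reindexes to the target. The arithmetic is routine but requires careful bookkeeping. The crucial conceptual step is the identification of the companion sequence $\tilde{g}(n)$, whose recurrence mirrors that of the auxiliary $g(n)$ arising from the recursive structure of the composition sequence $C(n)$.
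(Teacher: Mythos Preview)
Your argument is correct, and it takes a genuinely different (and in some ways tidier) route than the paper's.

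Both proofs obtain \eqref{7.6} in the same one-line way and both reduce \eqref{7.5} to matching recurrences, with the parity-splitting via \eqref{4.7} doing the real work on the binomial side. The difference lies in the auxiliary structure. The paper isolates the \emph{last part} $\mu_\ell(n)$, proves $\mu_\ell(n)=\nu(n-1)+1$, and arrives at the nonlinear recurrence
\[
f(2n-1)=\Bigl(1+\tfrac{1}{\nu(n-1)+1}\Bigr)f(n),\qquad f(2n)=f(n).
\]
On the sum side this forces them to introduce $T_1(n),T_2(n)$ and to prove the ratio identity $T_2(n)/T_1(n)=\nu(n)+1$ by a separate induction. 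Your companion $g(n)$ (product of all parts but the last) linearises the situation: the pair $(f,g)$ satisfies a clean coupled system, and your $\tilde g(n)$---which is precisely the paper's $T_1(n-1)$ after the shift $j\mapsto j+1$---lets you match that system directly without ever invoking the $2$-adic valuation. So the two proofs share the same (4.7)-based computations at the level of individual sums, but your packaging as a linear $2\times 2$ system replaces the paper's $\nu(n)$ detour and the auxiliary ratio induction. The trade-off is that you carry two sequences through the induction instead of one; the gain is that every step is additive and the combinatorial meaning of the auxiliary object is transparent.

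One small remark on exposition: the composition recurrence \eqref{7.2} is stated in the paper only for $n\ge 2$, so your identities for $(f,g)$ at $n=1$ (e.g.\ $f(1)=f(1)+g(1)$) are consistent with $g(1)=0$ rather than consequences of it. For the induction it would be cleanest to verify $r_2=f(2)$, $\tilde g(2)=g(2)$ directly as well (both equal $1$), after which the coupled recurrences for $n\ge 2$ genuinely reduce every later index to strictly smaller ones.
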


\begin{proof}
By \eqref{2.4} or \eqref{2.9} and by \eqref{1.10} we have
\[
\big((R\odot|R^{-1}|\big)_{n,k} = \binom{k-1}{n-k}^*\binom{2n-k-1}{n-k}^*,
\]
which immediately gives \eqref{7.6}. We prove \eqref{7.5} by showing that the
sequences on the right of \eqref{7.5} and \eqref{7.6} satisfy the same
recurrence relations. 

We begin with the sequence $f(n)$. In Table~4 we see that $f(1)=f(2)=1$.
Denote the final part in the composition $C(n)$ by $\mu_\ell(n)$;
then by \eqref{7.2} and \eqref{7.3} we have
\begin{equation}\label{7.7}
f(2n-1)=\left(1+\frac{1}{\mu_\ell(n)}\right)\cdot f(n),\qquad
f(2n) = f(n)\qquad (n\geq 2).
\end{equation}
To deal with the first identity in \eqref{7.7}, we let $\nu(n)$ be the 2-adic
valuation of $n$, and we claim that 
\begin{equation}\label{7.8}
\mu_\ell(n) = \nu(n-1)+1\qquad (n\geq 2);
\end{equation}
see Table~4 for the first 15 instances of this. We prove \eqref{7.8} by
induction on $n$. The cases $n=2, 3$ are clear by Table~4. Now suppose that
\eqref{7.8} holds up to $2n-2$, for some $n\geq 2$. By the first identity in
\eqref{7.2} and by the induction hypothesis we have
\begin{align*}
\mu_\ell(2n-1) &=\mu_\ell(n)+1=(\nu(n-1)+1)+1\\
& =\nu(2(n-1))+1=\nu((2n-1)-1)+1,
\end{align*}
as claimed, where we have also used an obvious property of the 2-adic
valuation. Next, by the second identity in \eqref{7.2} we have $\mu_\ell(2n)=1$,
while $\nu(2n-1)+1=1$, so \eqref{7.8} holds almost trivially in this case. 
Altogether we have thus shown that
\begin{equation}\label{7.9}
f(2n-1)=\left(1+\frac{1}{\nu(n-1)+1}\right)\cdot f(n),\qquad f(2n)=f(n)\qquad
(n\geq 2).
\end{equation}

To deal with the right-hand side of \eqref{7.6}, we denote this sum by $g(n)$
and verify that $g(1)=g(2)=1$. Also,
\[
g(2n) = \sum_{k=1}^{2n}\binom{k-1}{2n-k}^*\binom{4n-k-1}{2n-k}^*.
\]
When $k$ is odd, then both factors of each summand vanish by \eqref{4.7}. Hence
we set $k=2j$ and get, also by \eqref{4.7},
\begin{align}
g(2n)&=\sum_{j=1}^{n}\binom{2j-1}{2n-2j}^*\binom{4n-2j-1}{2n-2j}^*\label{7.10}\\
&=\sum_{j=1}^{n}\binom{j-1}{n-j}^*\binom{2n-j-1}{n-j}^*=g(n).\nonumber
\end{align}
For the odd case, it is more convenient to consider $g(2n+1)$ rather than
$g(2n-1)$. We split the corresponding sum according to $k=2j$ and $k=2j+1$,
obtaining
\begin{align*}
g(2n+1)&=\sum_{k=1}^{2n+1}\binom{k-1}{2n-k+1}^*\binom{4n-k+1}{2n-k+1}^*\\
&=\sum_{j=1}^{n}\binom{2(j-1)+1}{2n-2j+1}^*\binom{4n-2j+1}{2n-2j+1}^*
+\sum_{j=0}^{n}\binom{2j}{2n-2j}^*\binom{4n-2j}{2n-2j}^*.
\end{align*}
Using \eqref{4.7} again, we get
\begin{equation}\label{7.11}
g(2n+1)=\sum_{j=1}^{n}\binom{j-1}{n-j}^*\binom{2n-j}{n-j}^*
+\sum_{j=0}^{n}\binom{j}{n-j}^*\binom{2n-j}{n-j}^*.
\end{equation}
Denoting these last two sums by $T_1(n)$ and $T_2(n)$, we claim that
\begin{equation}\label{7.12}
\frac{T_2(n)}{T_1(n)} = \nu(n)+1\qquad (n\geq 1),
\end{equation}
where $\nu(n)$ is again the 2-adic valuation of $n$. It is important to note
that $T_1(n)\neq 0$ and $T_2(n)\neq 0$ for any $n\geq 1$ since the summand
belonging to $j=n$ is always 1 in both cases. We assume \eqref{7.12} to
be true and note that a shift in the index of summation gives
\[
T_2(n)=\sum_{j=1}^{n+1}\binom{j-1}{n+1-j}^*\binom{2(n+1)-j-1}{n+1-j}^*=g(n+1).
\]
Then with \eqref{7.11} we get
\[
g(2n+1)=\left(1+\frac{1}{\nu(n)+1}\right)g(n+1).
\]
Replacing $n$ by $n-1$ in this last identity and recalling \eqref{7.10} and the
initial conditions $g(1)=g(2)=1$, we see that the sequences $f(n)$ and $g(n)$
satisfy the same recurrence relation \eqref{7.9}, and are therefore identical.
This proves \eqref{7.5}.

It remains to prove \eqref{7.12}. We do so by induction on $n$. The cases
$n=1, 2$ can be verified by direct computation. Suppose now that \eqref{7.12}
holds up to $2n-1$, for some $n\geq 2$. Then, using \eqref{4.7} again
repeatedly, we get
\begin{align*}
T_1(2n)&=\sum_{j=1}^{2n}\binom{j-1}{2n-j}^*\binom{4n-j}{2n-j}^*
=\sum_{i=1}^{n}\binom{2(i-1)+1}{2n-2i}^*\binom{4n-2i}{2n-2i}^*\\
&=\sum_{i=1}^{n}\binom{i-1}{n-i}^*\binom{2n-i}{n-i}^* = T_1(n)
\end{align*}
and similarly, separating summands according to $j=2i$, $j=2i-1$,
\begin{align*}
T_2(2n)&=\sum_{j=1}^{2n}\binom{j}{2n-j}^*\binom{4n-j}{2n-j}^*\\
&=\sum_{i=1}^{n}\binom{2i}{2n-2i}^*\binom{4n-2i}{2n-2i}^*
+\sum_{i=1}^{n}\binom{2(i-1)+1}{2n-2i+1}^*\binom{4n-2i+1}{2n-2i+1}^*\\
&=\sum_{i=1}^{n}\binom{i}{n-i}^*\binom{2n-i}{n-i}^* 
+\sum_{i=1}^{n}\binom{i-1}{n-i}^*\binom{2n-i}{n-i}^* = T_2(n)+T_1(n).
\end{align*}
Hence by the induction hypothesis and the property $\nu(2n)=\nu(n)+1$ we get
\begin{equation}\label{7.13}
\frac{T_2(2n)}{T_1(2n)} = \frac{T_2(n)+T_1(n)}{T_1(n)}
=\frac{T_2(n)}{T_1(n)}+1 = \big(\nu(n)+1\big)+1=\nu(2n)+1,
\end{equation}
which was to be shown for the even case. In the odd case we proceed similarly,
obtaining
\[
T_1(2n+1)=T_2(n),\qquad T_2(2n+1)=T_2(n);
\]
we leave the details to the reader. These identities give
\[
\frac{T_2(2n+1)}{T_1(2n+1)} = \frac{T_2(n)}{T_2(n)} =1 = \nu(2n+1)+1,
\]
as desired. This proves \eqref{7.12} by induction, and the proof of 
Theorem~\ref{thm:10} is complete.
\end{proof}

An alternative to the Hadamard product of $R$ and $|R^{-1}|$ in 
Theorem~\ref{thm:10} is as follows: Given an $n\geq 1$ as in \eqref{7.5}, take
an integer $m$ such that $n\leq 2^m$ and reflect the submatrix $R_{2^m}$ about
its main antidiagonal to get $R'_{2^m}$. Now take the Hadamard product of 
$R_{2^m}$ and $R'_{2^m}$. The $n$th row sum is then equal to $f(n)$ as well.
An explanation is contained in Corollary~\ref{cor:14} in Section~\ref{sec:7}. 

\section{The Sierpi\'nski Triangle and Matrix}\label{sec:6}

As we saw, the columns of the infinite matrix $R$ of Section~\ref{sec:1} 
correspond to
the rows of the Pascal triangle modulo 2, which is also known as the 
Sierpi\'nski triangle. If we left-justify this triangle, we get the infinite
Sierpi\'nski matrix $S$, and once again we denote by $S_N$ the upper-left
$N\times N$ submatrix, as shown in \eqref{5.1} for $N=8$.

\begin{equation}\label{5.1}
S_{8}:=
\begin{pmatrix}
1 & & & & & & & \\
1 & 1 & & & & & & \\
1 & \cdot & 1 & & & & & \\
1 & 1 & 1 & 1 & & & & \\
1 & \cdot & \cdot & \cdot & 1 & & & \\
1 & 1 & \cdot & \cdot & 1 & 1 & & \\
1 &\cdot & 1 & \cdot & 1 & \cdot & 1 & \\
1 & 1 & 1 & 1 & 1 & 1 & 1 & 1 \\
\end{pmatrix}.
\end{equation}

It was shown by Callan \cite{Ca} that the inverse matrix
$S^{-1}$ is again the Sierpi\'nski matrix, but with entries 0, 1, and $-1$, and
the sign pattern determined by the Prouhet-Thue-Morse sequence, as was the case
with $R^{-1}$; see \eqref{5.1a}. 

\begin{equation}\label{5.1a}
S_{8}^{-1}=
\begin{pmatrix}
1 & & & & & & & \\
-1 & 1 & & & & & & \\
-1 & \cdot & 1 & & & & & \\
1 & -1 & -1 & 1 & & & & \\
-1 & \cdot & \cdot & \cdot & 1 & & & \\
1 & -1 & \cdot & \cdot & -1 & 1 & & \\
1 &\cdot & -1 & \cdot & -1 & \cdot & 1 & \\
-1 & 1 & 1 & -1 & 1 & -1 & -1 & 1 \\
\end{pmatrix}.
\end{equation}

The purpose of this brief section is to determine the row and antidiagonal
sums of $S$ and $S^{-1}$. More general kinds of diagonal sums are 
considered in \cite{No}.

\begin{corollary}\label{cor:10}
If $S$ is the Sierpi\'nski matrix, then
\begin{equation}\label{5.2}
r_n(S)=2^{d(n-1)},\qquad a_n(S)=s(n),
\end{equation}
where $d(n-1)$ is the sum of binary digits of $n-1$, and $s(n)$ is the $n$th
Stern number. Furthermore,
\begin{equation}\label{5.3}
r_n(S^{-1})=\begin{cases}
1, & n=1,\\
0, & n\geq 2,
\end{cases}\qquad
a_n(S^{-1})=\begin{cases}
0, & n\equiv 0\pmod 3,\\
1, & n\equiv 1\pmod 3,\\
-1, & n\equiv 2\pmod 3.
\end{cases}
\end{equation}
\end{corollary}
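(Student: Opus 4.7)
The plan is to treat the four identities in order of increasing difficulty, leaving the antidiagonal sums of $S^{-1}$ for last. Since $S_{n,k}=\binom{n-1}{k-1}^*$, the first identity in \eqref{5.2} follows exactly as in the proof of Corollary~\ref{cor:8}: by Lucas's congruence \eqref{3.3} a binomial coefficient $\binom{n-1}{j}$ is odd precisely when the binary digits of $j$ are a subset of the binary digits of $n-1$, so
\[
r_n(S)=\sum_{j=0}^{n-1}\binom{n-1}{j}^{\!*}=2^{d(n-1)}.
\]
The second identity in \eqref{5.2} is even quicker: $a_n(S)=\sum_{j\geq 0}\binom{n-j-1}{j}^*=s(n)$ is exactly \eqref{2.3} with $n$ replaced by $n-1$ and $x=1$.

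For the row sums of $S^{-1}$, I would simply observe that column $1$ of $S$ is the all-ones vector, so reading off the first column of the identity $S^{-1}S=I$ gives
\[
\sum_{k\geq 1}(S^{-1})_{n,k}\cdot 1=(S^{-1}S)_{n,1}=\delta_{n,1},
\]
and the left-hand side is $r_n(S^{-1})$.

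The substantive part is the antidiagonal sum of $S^{-1}$. Write $A_n:=a_n(S^{-1})$. From Callan's formula $(S^{-1})_{n,k}=(-1)^{t_{n-k}}\binom{n-1}{k-1}^*$ (which is the content of the quoted result of \cite{Ca}) one gets
\[
A_n=\sum_{j\geq 0}(-1)^{t_{n-2j-1}}\binom{n-j-1}{j}^{\!*}.
\]
I would then derive the key recurrence
\[
A_{2m}=-A_m,\qquad A_{2m+1}=A_m+A_{m+1}\qquad(m\geq 1).
\]
For the even case, $n-2j-1=2(m-j-1)+1$, so \eqref{1.9} gives $(-1)^{t_{n-2j-1}}=-(-1)^{t_{m-j-1}}$; by the parity rule \eqref{4.7} the terms with $j$ odd vanish, and setting $j=2i$ the surviving terms collapse (using $\binom{2(m-i-1)+1}{2i}^*=\binom{m-i-1}{i}^*$) into $-A_m$. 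For the odd case, $n-2j-1=2(m-j)$, so $(-1)^{t_{n-2j-1}}=(-1)^{t_{m-j}}$; splitting on the parity of $j$ and applying \eqref{4.7} twice, the two subseries reassemble into $A_{m+1}$ and $A_m$ respectively.

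With the recurrence in hand, the periodicity in \eqref{5.3} is proved by strong induction on $n$. The base cases $A_1=1$, $A_2=-1$, $A_3=0$ are immediate, and the inductive step is a routine three-case check on $m\bmod 3$: for instance, when $n=2m+1$ and $m\equiv 1\pmod 3$ we get $A_n=A_m+A_{m+1}=1+(-1)=0$, matching $2m+1\equiv 0\pmod 3$; the other five cases are analogous. The main obstacle is the careful bookkeeping in deriving the recurrence, since both the Prouhet-Thue-Morse signs and the Lucas-based parity rule \eqref{4.7} must be tracked simultaneously across even/odd splits of the summation index; once the clean recurrence emerges, the modulo-$3$ pattern is automatic.
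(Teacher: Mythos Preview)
Your proof is correct and follows essentially the same approach as the paper: the four identities are handled in the same way, and in particular for $a_n(S^{-1})$ you derive the identical recurrence $A_{2m}=-A_m$, $A_{2m+1}=A_m+A_{m+1}$ via \eqref{4.7} and \eqref{1.9}, then finish with the same modulo-$3$ induction. In fact you supply more detail on the recurrence derivation than the paper, which leaves that step to the reader by analogy with the proof of \eqref{4.9}.
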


\begin{proof}
The $(n,k)$-entry of the matrix $S$ is clearly $S_{n,k}=\binom{n-1}{k-1}^*$,
and so
\[
r_n(S)=\sum_{k=1}^n\binom{n-1}{k-1}^*.
\]
The first identity in \eqref{5.2} then follows from the proof of 
Corollary~\ref{cor:8}. Next, we have
\[
a_n(S)=\sum_{j=0}^{\lfloor\frac{n-1}{2}\rfloor}S_{n-j,j+1} 
=\sum_{j=0}^{\lfloor\frac{n-1}{2}\rfloor}\binom{n-j-1}{j}^* = s(n),
\]
which follows from \eqref{2.2}. To obtain the first identity of \eqref{5.3} we
consider the matrix product $S^{-1}S=I$, the infinite identity matrix. Then we
see that the $n$th row sum of $S^{-1}$ is just the element 
$(S^{-1}S)_{n,1}=I_{n,1}$, as claimed.

For the second part of \eqref{5.3}, we consider
\begin{equation}\label{5.4}
a_n(S^{-1})=\sum_{j=0}^{\lfloor\frac{n-1}{2}\rfloor}\big(S^{-1}\big)_{n-j,j+1}
=\sum_{j=0}^{\lfloor\frac{n-1}{2}\rfloor}\binom{n-j-1}{j}^*(-1)^{t_{n-2j-1}}.
\end{equation}
In a way that is completely analogous to the proof of \eqref{4.9} we can use
\eqref{5.4} to show that
\begin{equation}\label{5.5}
a_{2n}=-a_n,\qquad a_{2n+1}=a_n+a_{n+1},
\end{equation}
with initial conditions $a_1=1$ and $a_2=-1$, where for ease of notation we 
have suppressed the dependence on $S^{-1}$. We leave the details to the reader.

Next, we use induction and \eqref{5.5} to prove the second identity in 
\eqref{5.3}. The cases $n=1, 2, 3$ are easy to verify. We fix an $n\geq 4$ and
suppose that the statement holds up to $n-1$. If $n\equiv 0\pmod{3}$, then
$n=2k+r$ with $r=0, 1$, or 2 and $k\equiv r\pmod{3}$. If $r=0$, then
$a_n=-a_k=0$; if $r=1$, then $a_n=a_k+a_{k+1}=1+(-1)=0$; and if $r=2$, then
$a_n=-a_{k+1}=0$, where in all three cases we have used \eqref{5.5} and the
induction hypothesis. The cases $n\equiv 1,2\pmod{3}$ are analogous.
\end{proof}

\section{Further Remarks}\label{sec:7}

We close with a few remarks related to the previous sections.

\medskip
{\bf 1.} It may be of interest to determine the number of 1s in the $n$th row
of the matrix $R^{-1}$, that is, consider the sequence
1, 1, 1, 1, 2, 1, 1, 1, 4, 2, $\ldots$; see \eqref{1.8}. Since $R^{-1}$ has
entries 0, 1, $-1$ only, we can use Corollary~\ref{cor:9} to get the following
result.

\begin{corollary}\label{cor:11}
The number of $1$s in the $n$th row of the infinite matrix $R^{-1}$ is
\[
\begin{cases}
1 &\hbox{when $n$ is a power of $2$},\\
\frac{1}{2}r_n &\hbox{otherwise},
\end{cases}
\]
where $r_n = r_n(|R^{-1}|)$ is given by
\begin{equation}\label{6.1}
r_n = \sum_{k=0}^{n-1}\binom{n-1+k}{k}^*,
\end{equation}
and equivalently by the recurrence $r_1=1$ and
\begin{equation}\label{6.2}
r_{2n}=r_n,\qquad r_{2n+1}=2r_{n+1}\qquad(n\geq 1).
\end{equation}
\end{corollary}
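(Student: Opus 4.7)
The corollary has three assertions, which I would address in sequence and which together form a short proof.

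For the count of $1$s itself, I would use that Theorem~\ref{thm:1} forces every nonzero entry of $R^{-1}$ to be $\pm 1$. Writing $a_n$ and $b_n$ for the numbers of $+1$s and $-1$s in row $n$, we therefore have $r_n = a_n + b_n$ and, by definition, $r_n(R^{-1}) = a_n - b_n$. By Corollary~\ref{cor:9}, $r_n(R^{-1})$ is $1$ when $n$ is a power of $2$ and is $0$ otherwise, so $a_n = r_n/2$ in the non-power case. When $n = 2^{\alpha}$, Lemma~\ref{lem:5} applies with $2^{\alpha}-n = 0$, whose binary expansion is empty; hence $p_n(x) = 1$, which means row $n$ has exactly one nonzero entry, namely the $+1$ on the diagonal, so $a_n = 1$ there.

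Formula \eqref{6.1} I would read off directly from Theorem~\ref{thm:1}. For $n \geq 2$ and $1 \leq k \leq n$, that theorem gives $|(R^{-1})_{n,k}| = \binom{2n-k-1}{n-k}^{*}$, and summing over $k$ and substituting $j = n-k$ yields
\begin{equation*}
r_n \;=\; \sum_{k=1}^{n}\binom{2n-k-1}{n-k}^{*} \;=\; \sum_{j=0}^{n-1}\binom{n-1+j}{j}^{*},
\end{equation*}
which is \eqref{6.1}; the case $n = 1$ is the trivial $r_1 = 1$.

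For the recurrence \eqref{6.2} I would follow the template of the proofs of Corollaries~\ref{cor:8} and \ref{cor:9} and Theorem~\ref{thm:10}, applying the Lucas dichotomy \eqref{4.7} to \eqref{6.1} after splitting the summation index by parity. For $r_{2n}$, writing $k = 2i$ and $k = 2i+1$, the odd-$k$ summand is of the form $\binom{\text{even}}{\text{odd}}^{*}$ and vanishes by \eqref{4.7}, while the even-$k$ summand collapses to $\binom{n+i-1}{i}^{*}$, giving $r_{2n} = r_n$. For $r_{2n+1}$, both parities of $k$ contribute $\binom{n+i}{i}^{*}$, so one gets two copies of $r_{n+1}$ except for the single top term $\binom{2n}{n}^{*}$, which vanishes for $n \geq 1$ because $\binom{2n}{n} = \tfrac{2n}{n}\binom{2n-1}{n-1}$ is even. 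Hence $r_{2n+1} = 2r_{n+1}$. I do not anticipate any real obstacle; the only subtlety is the boundary bookkeeping and the invocation of this central-binomial parity fact, both of which have already been used in the preceding corollaries.
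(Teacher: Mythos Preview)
Your proof is correct and follows the same approach as the paper's: use Corollary~\ref{cor:9} to see that the $+1$s and $-1$s in a row are equinumerous off powers of~$2$, read~\eqref{6.1} off from Theorem~\ref{thm:1} via the substitution $j=n-k$, and derive~\eqref{6.2} by splitting the sum by parity and invoking~\eqref{4.7} together with the evenness of $\binom{2n}{n}$. Your explicit appeal to Lemma~\ref{lem:5} for the power-of-$2$ case (showing $p_{2^{\alpha}}(x)=1$ so that the row has a single nonzero entry) is a welcome bit of extra precision that the paper leaves implicit.
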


The sequence $r_n$ also satisfies $r_n=2^{z(n-1)}$, where $z(n-1)$ is the 
number of 0s in the binary expansion of $n-1$ ($n\geq 2$); see
\cite[A080100]{OEIS}. Furthermore, \eqref{6.2} shows that $r_n$ is positive and
even exactly when $n$ is not a power of 2.

\begin{proof}[Proof of Corollary~\ref{cor:11}]
By Corollary~\ref{cor:9}, the number of 1s in row $n$ of $R^{-1}$ is 1
when $n$ is a power of 2, and is equal to the number of $-1$s otherwise. Hence
it is half the number of nonzero terms in row $n$ of $R^{-1}$. But by
\eqref{1.10} this number of nonzero terms is
\[
r_n(|R^{-1}|) = \sum_{k=1}^{n}\binom{2n-k-1}{n-k}^*,
\]
and by reversing the order of summation we get \eqref{6.1}. Finally, the
recurrence \eqref{6.2} can be obtained from \eqref{6.1} in analogy to the
proofs of Corollaries~\ref{cor:9} and~\ref{cor:10}.
\end{proof}

{\bf 2.} The matrix $R$ and Theorem~\ref{thm:1} can be used to establish some
identities connecting the Stern sequence $s(n)$ defined in \eqref{1.1} and the
Prouhet-Thue-Morse sequence $(t_n)$, which was defined in \eqref{1.9}.

\begin{corollary}\label{cor:12}
For any integer $n\geq 2$ we have
\begin{equation}\label{6.4}
\sum_{k=0}^{n-2}\binom{n-1+k}{k}^*(-1)^{t_k}s(n-k) = 1,
\end{equation}
and as a consequence, for any $m\geq 1$,
\begin{equation}\label{6.5}
\sum_{k=0}^{2^m}(-1)^{t_k}s(2^m+1-k) = 0.
\end{equation}
\end{corollary}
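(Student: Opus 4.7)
The plan is to read the identity \eqref{6.4} as the matrix identity $R^{-1}R=I$ applied to the all-ones column vector $\mathbf{1}=(1,1,1,\ldots)^T$. Since $R$ and $R^{-1}$ are both lower-triangular with only finitely many nonzero entries in each row, multiplication against $\mathbf{1}$ (and associativity) make sense entry by entry, so that $R^{-1}(R\mathbf{1})=\mathbf{1}$. By the first identity in Corollary~\ref{cor:8}, $R\mathbf{1}$ is exactly the vector of Stern numbers $\big(s(n)\big)_{n\geq 1}$, and hence for every $n\geq 1$,
\[
\sum_{k=1}^n (R^{-1})_{n,k}\,s(k) = 1.
\]

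Next I would plug in the explicit form of $(R^{-1})_{n,k}$ coming from Theorem~\ref{thm:1}, namely $(R^{-1})_{n,k}=(-1)^{t_{n-k}}\binom{2n-k-1}{n-k}^{*}$, and reindex by $j=n-k$. This yields
\[
\sum_{j=0}^{n-1}(-1)^{t_j}\binom{n-1+j}{j}^{*}s(n-j)=1\qquad(n\geq 2).
\]
To reconcile this with \eqref{6.4}, I would show that the $j=n-1$ term vanishes whenever $n\geq 2$: it has factor $\binom{2n-2}{n-1}^{*}$, and the elementary identity $\binom{2m}{m}=2\binom{2m-1}{m-1}$ shows $\binom{2m}{m}$ is even for every $m\geq 1$. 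Dropping that term gives \eqref{6.4} exactly.

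For the consequence \eqref{6.5}, I would specialize \eqref{6.4} at $n=2^m+1$, so that $\binom{n-1+k}{k}=\binom{2^m+k}{k}$. A one-line application of Lucas's congruence \eqref{3.3} (in fact the same computation used in the proof of Lemma~\ref{lem:4}) shows $\binom{2^m+k}{k}^{*}=1$ for $0\leq k\leq 2^m-1$, because the binary digits of $k$ in that range are disjoint from the single bit of $2^m$ in position $m$. Hence \eqref{6.4} collapses to $\sum_{k=0}^{2^m-1}(-1)^{t_k}s(2^m+1-k)=1$. Finally, the recursion \eqref{1.9} gives $t_{2^m}=t_{2^{m-1}}=\cdots=t_1=1$, so the missing $k=2^m$ term equals $(-1)^{t_{2^m}}s(1)=-1$; adding it to the sum of length $2^m$ produces $1+(-1)=0$, which is \eqref{6.5}.

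There is no real obstacle here: the argument is essentially a transcription of Theorem~\ref{thm:1} and Corollary~\ref{cor:8} combined with two elementary parity facts about binomial coefficients. The only point requiring any care is the justification of the matrix manipulation for infinite lower-triangular arrays, which follows because each coordinate of the products $R\mathbf{1}$ and $R^{-1}(R\mathbf{1})$ is a finite sum — and this is exactly the same associativity used implicitly when proving $R^{-1}R=I$ in Section~\ref{sec:3}.
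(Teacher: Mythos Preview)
Your argument is correct and is essentially the paper's own proof: both obtain \eqref{6.4} from $R^{-1}(R\mathbf{1})=\mathbf{1}$ together with $r_n(R)=s(n)$ and Theorem~\ref{thm:1}, and both derive \eqref{6.5} by specializing to $n=2^m+1$, invoking Lucas to make every $\binom{2^m+k}{k}^*$ equal to $1$, and using $t_{2^m}=1$. You are simply more explicit than the paper about the vanishing of the $j=n-1$ term (via $\binom{2n-2}{n-1}\equiv 0\pmod 2$) and about the associativity of the infinite lower-triangular products.
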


\begin{proof}
Let $V$ and ${\bf 1}$ be the infinite column vectors consisting of the Stern
sequence and the sequence with only 1s, respectively. Then by the definition of
$R$ we have $V=R\cdot{\bf 1}$, and multiplying both sides of this identity by
$R^{-1}$ from the left, we get $R^{-1}\cdot V = {\bf 1}$. This last matrix
identity, together with Theorem~\ref{thm:1} and the definition of the vector 
$V$, gives \eqref{6.4} upon reversing the order of summation.

To obtain \eqref{6.5}, we set $n=2^m+1$ and note that for 
$0\leq k\leq n-2=2^m-1$, by \eqref{3.3} we have
$\binom{n-1+k}{k}\equiv 1\pmod{2}$. Now \eqref{6.4} implies \eqref{6.5} if we
also note that $t_k=1$ for $k=2^{m}$, which follows from \eqref{1.9}.
\end{proof}

We now use the identity \eqref{6.5} to obtain the following consequence.

\begin{corollary}\label{cor:13}
For any integer $m\geq 2$ we have
\begin{equation}\label{6.6}
\sum_{k=2^m}^{2^{m+1}-1}(-1)^{t_{k+1}}s(k) = -1.
\end{equation}
\end{corollary}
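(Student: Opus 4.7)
The plan is to prove \eqref{6.6} by induction on $m$. Let $T(m)$ denote the left-hand side of \eqref{6.6}; reindexing via $n = k+1$ gives
$$T(m) = \sum_{n = 2^m+1}^{2^{m+1}} (-1)^{t_n}\, s(n-1).$$
I would show $T(m) = T(m-1)$ for every $m \geq 2$, and check the base case $T(1) = (-1)^{t_3} s(2) + (-1)^{t_4} s(3) = 1 - 2 = -1$ directly; the recurrence then forces $T(m) = -1$ for all $m \geq 2$.

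For the induction step, I would split the sum by the parity of $n$ and apply the Stern recurrences $s(2i) = s(i)$, $s(2i+1) = s(i) + s(i+1)$ together with the PTM relations from \eqref{1.9}, namely $(-1)^{t_{2i}} = (-1)^{t_i}$ and $(-1)^{t_{2i+1}} = -(-1)^{t_i}$. The even values $n = 2i$ range over $i \in [2^{m-1}+1, 2^m]$ and contribute $(-1)^{t_i}[s(i-1) + s(i)]$, while the odd values $n = 2i+1$ range over $i \in [2^{m-1}, 2^m - 1]$ and contribute $-(-1)^{t_i}\, s(i)$. This produces three subsums. The two subsums of the form $\sum (-1)^{t_i} s(i)$ differ only by the boundary contribution
$$(-1)^{t_{2^m}} s(2^m) - (-1)^{t_{2^{m-1}}} s(2^{m-1}) = (-1)(1) - (-1)(1) = 0,$$
and so cancel exactly. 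The remaining subsum $\sum_{i = 2^{m-1}+1}^{2^m} (-1)^{t_i}\, s(i-1)$ becomes $T(m-1)$ after the substitution $i \mapsto i+1$.

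The main obstacle is the careful bookkeeping of the boundary terms; once one notes that $t_{2^j} = 1$ and $s(2^j) = 1$ for every $j \geq 0$, the cancellation is routine. Although \eqref{6.6} is advertised as a consequence of \eqref{6.5}, the parity-split argument above invokes only the shared Stern and PTM recurrences, in the same spirit as the proofs of Corollaries~\ref{cor:9} and~\ref{cor:10}. An alternative starting from \eqref{6.5} at levels $m$ and $m+1$ is possible but requires reindexings that obscure the $(-1)^{t_{k+1}}$ weight appearing in \eqref{6.6}, so the approach sketched above seems the most transparent.
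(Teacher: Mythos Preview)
Your proof is correct and takes a genuinely different route from the paper's. The paper derives \eqref{6.6} from \eqref{6.5}: it first rewrites \eqref{6.5} as $\sum_{k=0}^{2^m-1}(-1)^{t_{k+1}}s(2^m-k)=-(m+1)$ using $s(2^m+1)=m+1$, then subtracts this from the corresponding identity with $m$ replaced by $m+1$, and finally invokes the symmetry relation $s(2^{m+1}-k)-s(2^m-k)=s(k)$ from \cite{DS1}. Your argument is more self-contained: it needs only the defining recurrences \eqref{1.1} and \eqref{1.9} and the easy facts $t_{2^j}=1$, $s(2^j)=1$, and it avoids both \eqref{6.5} and the external symmetry identity. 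The paper's approach has the advantage of exhibiting \eqref{6.6} as a direct consequence of the matrix identity $R^{-1}\cdot V=\mathbf{1}$ that underlies Corollary~\ref{cor:12}, while yours is the cleaner standalone proof. One cosmetic point: your remaining subsum $\sum_{i=2^{m-1}+1}^{2^m}(-1)^{t_i}s(i-1)$ is already $T(m-1)$ as you defined it, so the phrase ``after the substitution $i\mapsto i+1$'' is superfluous (or refers to recovering the original indexing of \eqref{6.6}).
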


It is useful to compare \eqref{6.6} with the identities
\begin{equation}\label{6.7}
\sum_{k=2^m}^{2^{m+1}-1}s(k) = 3^{m},\qquad
\sum_{k=2^m}^{2^{m+1}-1}(-1)^{k+1}s(k) = 3^{m-1}.
\end{equation}
The first of these was already known to Stern \cite{St}; see also 
\cite{DS1} for further references and remarks. The second sum in 
\eqref{6.7} follows from the first and the recurrence relations in
\eqref{1.1}. To illustrate the identities in \eqref{6.6} and \eqref{6.7}, we 
take $m=3$ and obtain from Table~\ref{tab:1},
\begin{align*}
-1+4+3-5+2-5-3+4 &= -1,\\
1+4+3+5+2+5+3+4 &= 27,\\
-1+4-3+5-2+5-3+4 &= 9.\\
\end{align*}

\begin{proof}[Proof of Corollary~\ref{cor:13}]
We separate the term for $k=0$ in \eqref{6.5}, obtaining
\begin{equation}\label{6.8}
(-1)^{t_0}s(2^{m}+1)+\sum_{k=1}^{2^m}(-1)^{t_k}s(2^m+1-k) = 0.
\end{equation}
We recall that $t_0=0$ and note that $s(2^{m}+1)=m+1$, which follows by
induction from the fact that \eqref{1.1} gives
\[
s(2^{m}+1) = s(2^{m-1})+s(2^{m-1}+1) = 1+s(2^{m-1}+1),
\]
with the initial condition $s(2^0+1)=s(2)=1$. Hence the identity \eqref{6.8}
becomes, after shifting the summation,
\begin{equation}\label{6.9}
\sum_{k=0}^{2^m-1}(-1)^{t_{k+1}}s(2^m-k) = -(m+1).
\end{equation}
Now we replace $m$ by $m+1$ in \eqref{6.9} and subtract \eqref{6.9} from 
the identity thus obtained. This gives
\begin{equation}\label{6.10}
\sum_{k=2^m}^{2^{m+1}-1}(-1)^{t_{k+1}}\left(s(2^{m+1}-k)-s(2^m-k)\right)
= -1.
\end{equation}
Finally, using the well-known symmetry of the Stern sequence between $2^{m}$ 
and $2^{m+1}$, we get
\[
s(2^{m+1}-k)-s(2^m-k) = s(2^{m}+k)-s(2^{m}-k) = s(k),
\]
where the final equality was proved in Corollary~3.1 of \cite{DS1}. This, with
\eqref{6.10}, gives the desired identity \eqref{6.6}.
\end{proof}

{\bf 3.} Consider the $8\times 8$ submatrix $R_8$ (see \eqref{1.6}) and reflect
it along the main antidiagonal. The resulting matrix turns out to be the 
inverse $(R_8)^{-1}$, up to signs. This is in fact true in general, as the
following corollary shows.

\begin{corollary}\label{cor:14}
For any integer $m\geq 1$ we have
\begin{equation}\label{6.11}
\big(R_{2^m}^{-1}\big)_{n,k} 
= (-1)^{t_{n-k}}\big(R_{2^m}\big)_{2^m+1-k,2^m+1-n},
\end{equation}
where $(t_n)$ is the Prouhet-Thue-Morse sequence.
\end{corollary}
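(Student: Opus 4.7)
The plan is to verify \eqref{6.11} entry-by-entry using the explicit formulas for both sides. By \eqref{2.9} reduced mod~$2$ we have $R_{a,b}=\binom{b-1}{a-b}^{*}$, so direct substitution gives
$$R_{2^m+1-k,\,2^m+1-n}=\binom{2^m-n}{n-k}^{*},$$
while Theorem~\ref{thm:1} yields $(R_{2^m}^{-1})_{n,k}=(-1)^{t_{n-k}}\binom{2n-k-1}{n-k}^{*}$. The Prouhet--Thue--Morse sign factors therefore already match on both sides of \eqref{6.11}, and the corollary reduces to the binomial congruence
$$\binom{2n-k-1}{n-k}\equiv\binom{2^m-n}{n-k}\pmod{2}\qquad(1\le k\le n\le 2^m).$$

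To prove this, I would set $j=n-k$ (so $0\le j\le n-1$) and, for $n\ge 2$, pick the exponent $\alpha\ge 1$ with $2^{\alpha-1}<n\le 2^\alpha\le 2^m$ supplied by the hypothesis of Lemma~\ref{lem:5}. The congruence \eqref{3.6}, which was the key step in the proof of Lemma~\ref{lem:5}, then gives $\binom{n+j-1}{j}\equiv\binom{2^{\alpha}-n}{j}\pmod{2}$, so the problem reduces to bridging the two exponents, i.e.\ to showing $\binom{2^{\alpha}-n}{j}\equiv\binom{2^{m}-n}{j}\pmod{2}$ whenever $m\ge\alpha$ and $j<2^\alpha$.

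That final step is a direct application of Lucas's congruence~\eqref{3.3}: since $2^\alpha-n<2^{\alpha-1}$, the binary digits of $2^\alpha-n$ occupy only positions $0,\dots,\alpha-2$, and the decomposition $2^m-n=(2^m-2^\alpha)+(2^\alpha-n)$ shows that $2^m-n$ retains those same low-order digits while acquiring a block of $1$s in positions $\alpha,\dots,m-1$, with no carry. Because $j\le n-1<2^\alpha$, the digits of $j$ vanish above position $\alpha-1$, so those high-position Lucas factors are all $\binom{1}{0}=1$ and the remaining factors coincide with those of $\binom{2^\alpha-n}{j}$. The two trivial cases $n=1$ and $n=2^m$ drop out by inspection. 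The only real subtlety, and the main obstacle I anticipate, is this binary bookkeeping, which the bound $j<2^\alpha$ is put there precisely to guarantee.
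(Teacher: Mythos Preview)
Your proof is correct and follows essentially the same approach as the paper: both compute the two sides via the explicit formulas, reduce \eqref{6.11} to the congruence $\binom{2n-k-1}{n-k}\equiv\binom{2^m-n}{n-k}\pmod 2$, and invoke the method behind \eqref{3.6}. The only minor difference is that you first apply \eqref{3.6} to reach $\binom{2^\alpha-n}{j}$ and then bridge to $\binom{2^m-n}{j}$ via Lucas, whereas the paper suggests rerunning the argument of \eqref{3.6} directly with $2^m$ in place of $2^\alpha$, which eliminates the bridging step.
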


\begin{proof}
It is clear that the reflection of the matrix entry with index $(n,k)$, i.e.,
row $n$ and column $k$, $1\leq n,k\leq 2^m$, is the entry with index 
$(2^m+1-k,2^m+1-n)$, and vice versa. Now by \eqref{2.4} or \eqref{2.9} we
have $R_{k,j}=\binom{j-1}{k-j}^*$, and thus
\[
\big(R_{2^m}\big)_{2^m+1-k,2^m+1-n} = \binom{2^m-n}{n-k}^*.
\]
Hence by Theorem~\ref{thm:1} we are done if we can show that
\[
\binom{2^m-n}{n-k}\equiv\binom{2n-k-1}{n-k}\pmod{2},\qquad 
1\leq k\leq n\leq 2^m.
\]
This can be done in the same way as in the proof of \eqref{3.6}; we leave the
details to the reader.
\end{proof}

{\bf 4.} The matrix $R$ can also be used to visualize an identity satisfied by
the Stern polynomials $s(n;x)$ defined by \eqref{1.2} and \eqref{1.3}. We 
divide the submatrix $R_{16}$ into 4 blocks as follows.

\bigskip
\begin{center}
\begin{tabular}{|cccccccc|cccccccc|}
\hline
1 & & & & & & & & & & & & & & &  \\
$\cdot$ & 1 & & & & & & & & & & & & & & \\
$\cdot$ & 1 & 1 & & & & & & & & & & & & & \\
$\cdot$ & $\cdot$ & $\cdot$ & 1 & & & & & & & & & & & & \\
$\cdot$ & $\cdot$ & 1 & 1 & 1 & & & & & & & & & & & \\
$\cdot$ & $\cdot$ & $\cdot$ & 1 & $\cdot$ & 1 & & & & & & & & & & \\
$\cdot$ & $\cdot$ & $\cdot$ & 1 & $\cdot$ & 1 & 1 & & & & & & & & & \\
$\cdot$ & $\cdot$ & $\cdot$ & $\cdot$ & $\cdot$ & $\cdot$ & $\cdot$ & 1 & & & & & & & & \\
\hline 
$\cdot$ & $\cdot$ & $\cdot$ & $\cdot$ & 1 & $\cdot$ & 1 & 1 & 1 & & & & & & & \\
$\cdot$ & $\cdot$ & $\cdot$ & $\cdot$ & $\cdot$ & 1 & $\cdot$ & 1 & $\cdot$ & 1 & & & & & & \\
$\cdot$ & $\cdot$ & $\cdot$ & $\cdot$ & $\cdot$ & 1 & 1 & 1 & $\cdot$ & 1 & 1 & & & & & \\
$\cdot$ & $\cdot$ & $\cdot$ & $\cdot$ & $\cdot$ & $\cdot$ & $\cdot$ & 1 & $\cdot$ & $\cdot$ & $\cdot$ & 1 & & & & \\
$\cdot$ & $\cdot$ & $\cdot$ & $\cdot$ & $\cdot$ & $\cdot$ & 1 & 1 & $\cdot$ & $\cdot$ & 1 & 1 & 1 & & & \\
$\cdot$ & $\cdot$ & $\cdot$ & $\cdot$ & $\cdot$ & $\cdot$ & $\cdot$ & 1 & $\cdot$ & $\cdot$ & $\cdot$ & 1 & $\cdot$ & 1 & & \\
$\cdot$ & $\cdot$ & $\cdot$ & $\cdot$ & $\cdot$ & $\cdot$ & $\cdot$ & 1 & $\cdot$ & $\cdot$ & $\cdot$ & 1 & $\cdot$ & 1 & 1 & \\
$\cdot$ & $\cdot$ & $\cdot$ & $\cdot$ & $\cdot$ & $\cdot$ & $\cdot$ & $\cdot$ & $\cdot$ & $\cdot$ & $\cdot$ & $\cdot$ & $\cdot$ & $\cdot$ & $\cdot$ & 1 \\
\hline
\end{tabular}
\end{center}
\bigskip
Then, using the definition of $R$, as given just before \eqref{1.6}, we can 
read off the identities
\begin{equation}\label{6.12}
s(8+j;x) = s(j;x) + x^j\cdot s(8-j;x),\qquad 0\leq j\leq 8.
\end{equation}
If we take, for instance, $j=5$, then $s(13;x)=s(5;x)+x^j\cdot s(3;x)$ or, by
Table~1,
\[
1+x+x^2+x^5+x^6 = (1+x+x^2)+x^5(1+x);
\]
recall that the rows of $R$ must be read from right to left, beginning at the 
main diagonal, in order to obtain the coefficients of the Stern polynomials.
The identity \eqref{6.12} is a special case of Lemma~2.1 in \cite{DS1}: For
any $m\geq 0$ and $0\leq j\leq 2^m$ we have
\[
s(2^m+j;x) = s(j;x) + x^j\cdot s(2^m-j;x).
\]

\medskip
{\bf 5.} As mentioned at the beginning of Section~\ref{sec:4}, the diagonal 
sums for the various infinite matrices diverge. However, we observe that
in the case of the matrix $R$ (see \eqref{1.6}), every entry in the main
diagonal is 1, and so is every second entry in the first subdiagonal. 
Furthermore, two out of four entries in the second subdiagonal and one out of
four entries in the third subdiagonal are 1; hence the sequence of ratios of
entries 1 among all entries begins with 1, 2, 2, 4.
As we will see, it is no coincidence that this is the beginning
of Gould's sequence $G(n):=2^{d(n)}$, where $d(n)$ is the sum of digits in the 
binary expansion of $n$; see \cite[A001316]{OEIS}. We recall that the sequence
$G(n)$ already occurred as column sums of $R$ in \eqref{4.4}.

\begin{corollary}\label{cor:15}
The ratio of nonzero entries among all entries in the $n$th diagonals of the
infinite matrices $R$, $R^{-1}$, $S$, and $S^{-1}$ is $1/G(n)$, where
$\big(G(n)\big)_{n\geq0}$ is Gould's sequence and the main diagonal 
corresponds to $n=0$.
\end{corollary}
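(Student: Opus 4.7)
The plan is to reduce the statement to counting the natural density of $1$'s in a sequence of binomial coefficients modulo $2$, and to use Lucas's congruence \eqref{3.3} for the count.

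First I would read off, for each of the four matrices, the entry on the $n$th subdiagonal in column $k$ in closed form modulo $2$. By \eqref{2.9} (or equivalently \eqref{2.4}), $R_{k+n,k}=\binom{k-1}{n}^*$. By Theorem~\ref{thm:1}, $\bigl|(R^{-1})_{k+n,k}\bigr|\equiv\binom{k+2n-1}{n}\pmod{2}$. Since $S_{j,k}=\binom{j-1}{k-1}^*$, and since \eqref{5.1a} shows that $|S^{-1}|=S$ entrywise, the $n$th subdiagonal of both $S$ and of $|S^{-1}|$ is $S_{k+n,k}=\binom{k+n-1}{n}^*$.

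Next I would invoke Lucas's congruence in two equivalent forms for $p=2$: $\binom{a}{b}$ is odd iff the binary support of $b$ is contained in that of $a$, and $\binom{a+b}{b}$ is odd iff $a$ and $b$ have disjoint binary supports. Applied to the three closed forms above, this says that the entry at column $k$ of the $n$th subdiagonal is nonzero iff (respectively) the $1$-digits of $n$ form a subset of those of $k-1$; of those of $k+2n-1$; or are disjoint from those of $k-1$.

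Finally I would count the density. Fix $n$, write $d:=d(n)$ for its binary digit sum, and choose $N$ so large that $n<2^N$. In each of the three conditions, exactly $d$ binary positions of the free variable are constrained (either forced to $1$ in the containment cases or forced to $0$ in the disjoint case), while the remaining $N-d$ positions are unconstrained. Hence among $2^N$ consecutive values of $k$, exactly $2^{N-d}$ produce a nonzero entry. Letting $N\to\infty$, the natural density of nonzero entries is $2^{-d}=1/G(n)$; the bounded shift in the range of the free variable that appears in the $R^{-1}$ case (where $k+2n-1$ ranges over $m\geq 2n$ instead of $m\geq 0$) is absorbed in the limit. No step is hard once the three closed forms are written down; the only mild obstacle is to interpret the word ``ratio'' in the statement as a natural density.
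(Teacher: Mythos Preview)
Your proof is correct and somewhat more direct than the paper's. Both arguments rest on Lucas's congruence and on the periodicity (with period $2^N$ once $n<2^N$) of the relevant binomial-coefficient sequences along a subdiagonal. The paper, however, takes a detour: for $R$ it first uses the antidiagonal symmetry of the finite Sierpi\'nski matrix $S_{2^m}$ (identity \eqref{6.13}) to convert the count of $1$'s in a \emph{column} of $S_{2^m}$ into a \emph{row} count, then invokes the known row-count formula (Gould's sequence) together with the complementary product identity $G(n)\cdot G(2^m-1-n)=2^m$ (identity \eqref{6.14}) to obtain $1/G(n)$. Your argument bypasses both the symmetry and the product identity by counting directly via the support-containment (respectively disjointness) characterization of oddness from Lucas's congruence, which immediately fixes $d(n)$ bits and leaves $N-d(n)$ free. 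The paper's route has the minor advantage of giving the exact count per period with no limiting language; your remark that the shift in the $R^{-1}$ case is ``absorbed in the limit'' can in fact be sharpened: by periodicity, \emph{any} block of $2^N$ consecutive columns already contains exactly $2^{N-d(n)}$ nonzero entries, so no limit is needed.
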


\begin{proof}
We use three facts that are likely known, but for the sake of completeness we
indicate how to prove them. First, for any $m\geq 0$ the finite Sierpi\'nski
matrix $S_{2^m}$ is symmetric about its main antidiagonal; see \eqref{5.1}. 
This can be stated as
\begin{equation}\label{6.13}
\binom{j}{n}^* = \binom{2^m-1-n}{2^m-1-j}^*,\qquad 0\leq j, n\leq 2^m-1.
\end{equation}
The identity \eqref{6.13} can be proved by writing the right-hand binomial
coefficient (without the asterisk) in terms of factorials, then cancel equal
factors in numerator and denominator, and finally proceed as we did in the
proof of \eqref{3.9}.

Second, for $m\geq 0$ we require the identity
\begin{equation}\label{6.14}
G(n)\cdot G(2^m-1-n)=2^m,\qquad 0\leq n\leq 2^m-1.
\end{equation}
By definition of $G(n)$, this identity is equivalent to $d(n)+d(2^m-1-n)=m$
which, in turn, follows from the fact that the binary expansion of $2^m-1$
consists of $m$ digits 1.

Third, it follows from Lucas's congruence \eqref{3.3}, with $j$ replaced by $n$
and $k$ replaced by $j$, that for a fixed $n<2^m$, the term
$\binom{j}{n}^*$ is periodic with period $2^m$.

To prove the statement of the corollary, we first recall that 
$R_{n,k}=\binom{k-1}{n-k}^*$, so that the $j$th term of the $n$th diagonal
($n\geq 0$) is
\begin{equation}\label{6.15}
R_{n+j,j}=\binom{j-1}{n}^*,\qquad j=1, 2, 3, \ldots
\end{equation}
Now let $m$ be the smallest integer such that $n\leq 2^m-1$; then the terms in 
\eqref{6.15} form column $n+1$ of the Sierpi\'nski matrix $S_{2^m}$.
So, by \eqref{6.13} we get
\begin{equation}\label{6.16}
R_{n+j,j}=\binom{2^m-1-n}{2^m-j}^*,
\end{equation}
and by definition of Gould's sequence there are $G(2^m-1-n)$ terms among the
$2^m$ terms $j=1, 2, \ldots, 2^m$ that are 1. By \eqref{6.14} this number is
$2^m/G(n)$; so the proportion of 1s is $1/G(n)$, and by periodicity this holds
for the entire $n$th diagonal. 

To deal with the remaining three matrices, we note that Theorem~\ref{thm:1}
and Section~\ref{sec:5} give, respectively, 
\begin{equation}\label{6.17}
\left|\big(R^{-1}\big)_{n+j,j}\right|=\binom{2n+j-1}{n}^*,\quad
S_{n+j,j}=\left|\big(S^{-1}\big)_{n+j,j}\right|=\binom{n+j-1}{n}^*.
\end{equation}
Since the right-hand sides of the identities in \eqref{6.17} are just shifted
versions of the right-hand side of \eqref{6.15}, the periodicity property shows
that the statement of the corollary also holds for $R^{-1}$, $S$, and $S^{-1}$.
\end{proof}

\medskip
{\bf 6.} In this paper we have come across several ``Stern-like"
sequences. All of them are listed in the OEIS \cite{OEIS}, with an appropriate
shift in some cases. We list them in Table~5, with references to 
where they occur.

\bigskip
\begin{center}
{\renewcommand{\arraystretch}{1.2}
\begin{tabular}{|c|c|c|c|c|l|}
\hline
$a(2n)=$ & $a(2n+1)=$ & $a(1), a(2)$ & Eqn. & OEIS & Remarks \\
\hline\hline
$a(n)$ & $a(n)+a(n+1)$ & 1, 1 & \eqref{1.1} & \seqnum{A002487} & Stern \\
\hline
$a(n)$ & $1-a(n)$ & 1, 1 & \eqref{1.9} & \seqnum{A010060} & Prouhet-Thue-Morse\\
\hline
$a(n)$ & $2\cdot a(n)$ & 2, 2 & \eqref{4.4} & \seqnum{A001316} & $c_{n+1}(R)$; Gould\\
\hline
$a(n-1)$ & $a(n)+a(n+1)$ & 1, 0 & \eqref{4.5} & \seqnum{A106345} & $a_n(R)$\\
\hline
$-a(n+1)$ & $a(n)+a(n+1)$ & 1, 0 & \eqref{4.9} & \seqnum{A342682} & $a_n(R^{-1})$\\
\hline
$-a(n)$ & $a(n)+a(n+1)$ & 1, $-1$ & \eqref{5.5} & \seqnum{A049347} & $a_n(S^{-1})$; $\overline{1,-1,0}$\\
\hline
$a(n)$ & $2\cdot a(n+1)$ & 1, 1 & \eqref{6.2} & \seqnum{A080100} & $r_n(|R^{-1}|)$\\
\hline
\end{tabular}}

\medskip
{\bf Table~5}:\label{tab:5} Stern-like sequences and their occurrences.
\end{center}
\bigskip

\end{document}